\newtheorem{Theorem}{Theorem}[section]
\newtheorem{Lemma}[Theorem]{Lemma}
\theoremstyle{Definition}
\newtheorem{Definition}[Theorem]{Definition}
\newtheorem{Example}[Theorem]{Example}
\newtheorem{Corollary}[Theorem]{Corollary}
\theoremstyle{Remark}
\newtheorem{Remark}[Theorem]{Remark}
\theoremstyle{notation}
\numberwithin{equation}{section}
 \def\3{\operatorname{3}}
\begin{document}

\title{ Reconstruction of mapping spaces by inverse limits}

\author{Jing-Wen Gao   }
\address{School of Mathematics and Statistics, Huazhong University of Science and Technology, Wuhan 430074, P.R.China}

\email{d202280005@hust.edu.cn}

\author{Xiao-Song Yang  }

\thanks{} 

\subjclass[2020]{54C60, 54D10, 55U05}

\date{\today}


\keywords{finite topological spaces, compact metric spaces, mapping spaces, inverse limits}

\begin{abstract}
	Extending the results of reconstruction of compact metric spaces by inverse limits,
	we show that if $(X, d), (Y, d)$ are compact metric spaces, then the mapping space $Y^X$ is homotopy equivalent to the inverse limit of an inverse system of finite $T_0$-spaces which depends only on the finite open covers of $X$ and $Y$. Applying our tools, we obtain that if $H$ is an isotopy of a compact metric space $(X, d)$, then $H_1H^{-1}_0$ can be approximated in terms of moves of a finite $T_0$-space.
\end{abstract}

\maketitle
\section{Introduction}
Finite topological spaces seem to be uninteresting at first glance. This is based on a mistaken understanding that a finite space is always equipped with the discrete topology.
 In fact,  there is a well-known bijective correspondence between topologies and preorders on a finite set \rm{\cite{PSA1937}}. A one-to-one correspondence up to weak homotopy equivalence between compact polyhedra and finite topological spaces was established by McCord \rm{\cite{MC1966}}.
Stong \rm{\cite{ST1966}}  developed the combinatorial methods in the theory of finite topological spaces.

The technique of inverse limits together with  finite topological spaces has been widely used in reconstructing compact spaces. 
 For example, Clader \rm{\cite{EC2009}} proved that the geometric realization $|K|$ of a finite simplicial complex $K$ has the same homotopy type of the inverse limit of an inverse sequence of finite $T_0$-spaces and the inverse limit contains a homeomorphic copy of $|K|$. Later in \rm{\cite{DMR2015}}, Ruiz generalized Clader's result to compact metric spaces. A similar result was obtained by Bilski in \rm{\cite{PB2017}} for locally compact, paracompact, Hausdorff spaces.

The approximation of compact metric spaces by inverse limits plays an important role in shape theory. This theory, introduced by Borsuk \rm{\cite{KB1968}}, is a modification of the classical homotopy theory for general topological spaces that have bad local behavior.
The relationship between shape theory and the theory of finite topological spaces has been exploited in some recent results about the reconstruction of compact metric spaces \rm{\cite{PJC2022,DM2020}}. Moreover, in \rm{\cite{DM2020}}, the reconstruction of topological properties of compact metric spaces is used to provide a different way to obtain persistent modules \rm{\cite{AZ2005}} from a point cloud, which is an important notion in topological data analysis \rm{\cite{HE2008}}.

In this paper, we generalize the results of the approximation of compact metric spaces
to the mapping spaces of continuous mappings between compact metric spaces. In Theorem \ref{THE1}, it is shown that if $(X, d), (Y, d)$ are compact metric spaces, then the mapping space $Y^X$ is homeomorphic to a subspace of the inverse limit of an inverse system of finite $T_0$-spaces that is a strong deformation retract of this inverse limit. 
Applying McCord's correspondence, we obtain an inverse system of compact polyhedra of which inverse limit is weak homotopy equivalent to the mapping space $Y^X$ in Corollary \ref{CO1}.

This paper is organized as follows. In Section \ref{section2}, we gives some definitions and results concerning finite topological spaces. In Section \ref{section3}, we describe the construction of finite models needed for Theorem \ref{THE1}. In Section \ref{section4}, we prove Theorem \ref{THE1} and Corollary \ref{CO1}. In Section \ref{section5}, using our techniques, we relate the isotopy of a compact metric space to the one of a finite $T_0$-space in Theorem \ref{thmiso}.

\section{Preliminaries}\label{section2}
In this section, we recall some properties of finite topological spaces  that are needed in the sequel. More details can be found in \rm{\cite{JAB2011, JAB2008, JPM1, JPM2}}.

A topological space is called an Alexandroff space if every intersection of open sets is open. If $X$ is an Alexandroff space, for every $x\in X$, $U_x$ denotes  the minimal open set which contains $x$, that is, the intersection of all the open sets of $X$  containing $x$. 
The family $\{U_x\}_{x\in X}$ forms the minimal basis for the topology of $X$. 
Given an Alexandroff space, we define a reflexive and transitive relation (preorder) on $X$ with   $x\leq y$ provided $U_x\subseteq U_y$. This relation is a partial order  if and only if $X$ is $T_0$. Conversely, if $X$ is a set with a reflexive and transitive relation on it, then the sets $U_x=\{y\in X~|~y\leq x\}$ forms a basis for an Alexandroff topology on $X$. 
This Alexandroff topology on $X$ is $T_0$ if and only if this relation is a partial order.
A finite $T_0$-space is a finite Alexandroff $T_0$-space.

Hence, for a given set $X$, the discussion above gives a one-to-one correspondence between
Alexandroff topologies on it and its reflexive, transitive relations. Moreover, we can see that Alexandroff $T_0$-spaces and posets (a set with a partial order) are the same object. The notions of morphisms in Alexandroff spaces and in posets are exactly the same. More precisely, a  map between  Alexandroff spaces is continuous if and only if it is order preserving, namely, $x\leq y$ implies $f(x)\leq f(y)$.
 
With the equivalence discussed above, we will relate finite $T_0$-space with finite simplicial complexes by two theorems, following the work of McCord \rm{\cite{MC1966}}. We write |K| for the geometric realization of a simplicial complex $K$.

\begin{Theorem} {\rm (McCord)}\label{MC1}
Let $X$ be a finite $T_0$-space. There exists a finite simplicial complex $\mathscr{K}(X)$, and a weak homotopoy equivalence $$h_X: |\mathscr{K}(X)|\rightarrow X.$$ A continuous map $\phi: X\rightarrow Y$ of finite $T_0$-spaces induces a simplicial map $\mathscr{K}({\phi}): \mathscr{K}(X)\rightarrow \mathscr{K}(Y)$ such that $\phi\circ h_X=h_Y\circ|\phi|$.
\end{Theorem}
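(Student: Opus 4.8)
The plan is to realize $\mathscr{K}(X)$ as the \emph{order complex} of the poset underlying $X$: its vertices are the points of $X$, and its simplices are the nonempty chains $x_0 < x_1 < \cdots < x_n$ of $X$. Since $X$ is finite this is a finite simplicial complex. I would then define
\[
h_X(\alpha) = \min \operatorname{supp}(\alpha),
\]
where $\operatorname{supp}(\alpha)$ denotes the set of vertices whose barycentric coordinate in $\alpha$ is positive; this is well defined because $\operatorname{supp}(\alpha)$ is a chain and so has a least element. The first step is continuity: using the minimal basis $\{U_x\}$ of $X$ from Section \ref{section2}, one checks that $h_X(\alpha)\in U_x$ iff $\min\operatorname{supp}(\alpha)\le x$ iff $\operatorname{supp}(\alpha)\cap U_x\neq\emptyset$, so that $h_X^{-1}(U_x)$ is exactly the open star in $|\mathscr{K}(X)|$ of the full subcomplex spanned by $U_x$, which is open.

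The heart of the argument is that $h_X$ is a weak homotopy equivalence. Here I would invoke McCord's criterion for basis-like open covers: if $\mathcal{U}$ is an open cover of the base which forms a basis for its topology and the map restricts to a weak homotopy equivalence over every member of $\mathcal{U}$, then the map itself is a weak homotopy equivalence. The minimal basis $\{U_x\}_{x\in X}$ is such a cover. It then suffices to check that both $U_x$ and $h_X^{-1}(U_x)$ are weakly contractible, because any map between two weakly contractible spaces is automatically a weak homotopy equivalence. On the one hand $U_x$ has $x$ as a maximum, so as a finite space it is contractible (the identity and the constant map at $x$ are comparable, hence homotopic). On the other hand $h_X^{-1}(U_x)$ deformation retracts onto $|\mathscr{K}(U_x)|$, and $\mathscr{K}(U_x)$ is a cone with apex $x$, since every chain in $U_x$ may be extended by $x$ at the top; thus it is contractible as well.

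Finally, for functoriality I would note that an order-preserving $\phi\colon X\to Y$ sends every chain of $X$ to a chain of $Y$ (after deleting repetitions), so the vertex assignment $x\mapsto\phi(x)$ is simplicial and defines $\mathscr{K}(\phi)\colon\mathscr{K}(X)\to\mathscr{K}(Y)$, with $|\phi| := |\mathscr{K}(\phi)|$. To obtain $\phi\circ h_X = h_Y\circ|\phi|$ on the nose, observe that $|\phi|$ carries $\alpha$ to a point with $\operatorname{supp}(|\phi|(\alpha)) = \phi(\operatorname{supp}(\alpha))$, so $h_Y(|\phi|(\alpha)) = \min\phi(\operatorname{supp}(\alpha))$; since $\phi$ is order-preserving, the image of the least element of a chain is the least element of the image, giving $\min\phi(\operatorname{supp}(\alpha)) = \phi(\min\operatorname{supp}(\alpha)) = \phi(h_X(\alpha))$.

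I expect the main obstacle to be the basis-like cover criterion itself, which is the substantive topological input and is proved by a nerve/Mayer--Vietoris style local-to-global argument over the minimal basis; by comparison, the identification of the order complex, the contractibility of $U_x$ and of the stars, and the verification of the commuting square are routine.
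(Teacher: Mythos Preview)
The paper does not give its own proof of this theorem: it is quoted as McCord's result with a reference to \cite{MC1966}, and the only addition is the sentence immediately following the statement identifying $\mathscr{K}(X)$ as the order complex of $X$ (vertices the points of $X$, simplices the totally ordered subsets). So there is nothing in the paper to compare your argument against.

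That said, your proposal is exactly McCord's original argument and is correct. The construction of $\mathscr{K}(X)$ agrees with the paper's description; the map $h_X(\alpha)=\min\operatorname{supp}(\alpha)$ (McCord uses $\max$, but either works by symmetry), the identification of $h_X^{-1}(U_x)$ with the open star of the full subcomplex on $U_x$, the contractibility of $U_x$ via its maximum and of $|\mathscr{K}(U_x)|$ as a cone, and the appeal to the basis-like open cover criterion are all standard and sound. Your functoriality check is also fine: order-preservation of $\phi$ guarantees $\min\phi(C)=\phi(\min C)$ for a chain $C$, so the square commutes on the nose. The one genuinely nontrivial input you flag---the local-to-global weak-equivalence criterion for basis-like covers---is indeed the substantive step, and it is precisely the theorem McCord proves first in \cite{MC1966} before applying it here.
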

In fact, $\mathscr{K}(X)$ is a simplicial complex whose vertices are the points of $X$, and simplices are the subsets of $X$ that are totally ordered.

\begin{Theorem}{\rm (McCord)}
	Let  $K$ be  a finite simplicial complex. There exists a finite $T_0$-space $\chi(K)$,
 and  a weak homotopy equivalence $$h_K: |K|\rightarrow \chi(K).$$
	A simplicial map $\psi: K\rightarrow L$ induces a continuous map $\chi({\psi}): \chi(K)\rightarrow \chi(L)$ such that $\chi(\psi)\circ h_K$ is homotopic to $h_L\circ|\psi|$.
\end{Theorem}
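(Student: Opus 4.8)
The plan is to realize $\chi(K)$ as the \emph{face poset} of $K$ and $h_K$ as the support map, after which the weak equivalence will follow from McCord's basis-like open-cover criterion \cite{MC1966}. First I would let $\chi(K)$ be the set of nonempty simplices of $K$, partially ordered by $\tau\le\sigma$ if and only if $\sigma\subseteq\tau$ (reverse inclusion); this is a finite poset, hence a finite $T_0$-space, and its minimal open set at $\sigma$ is $U_\sigma=\{\tau:\sigma\subseteq\tau\}$, the simplices having $\sigma$ as a face. I would then define $h_K\colon |K|\to\chi(K)$ by $h_K(x)=\mathrm{supp}(x)$, the unique simplex whose open face contains $x$. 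Continuity is checked on the minimal basis: $h_K^{-1}(U_\sigma)=\{x:\sigma\subseteq\mathrm{supp}(x)\}=\mathrm{st}(\sigma)$ is the open star of $\sigma$, which is open in $|K|$.

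For the weak homotopy equivalence I would invoke McCord's criterion: if $\mathcal{U}$ is a basis-like open cover of $\chi(K)$ and each restriction $h_K|\colon h_K^{-1}(U)\to U$ is a weak homotopy equivalence, then $h_K$ is one. The minimal open sets $\{U_\sigma\}$ form a basis, so they constitute a basis-like cover. Each $U_\sigma$ has $\sigma$ as a maximum, hence is contractible: the identity of $U_\sigma$ is pointwise $\le$ the constant map at $\sigma$, and comparable maps into a finite $T_0$-space are homotopic. Its preimage $\mathrm{st}(\sigma)$ is contractible as well, since the open star of a simplex deformation retracts onto that simplex's open face. A map between contractible spaces is a weak homotopy equivalence, so every restriction qualifies and the criterion applies. (Alternatively, one can observe that the order complex $\mathscr{K}(\chi(K))$ is the barycentric subdivision of $K$ and deduce the weak equivalence from Theorem \ref{MC1} applied to $\chi(K)$, composed with the canonical homeomorphism $|K|\cong|\mathscr{K}(\chi(K))|$.)

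For the functorial statement, a simplicial map $\psi\colon K\to L$ carries a simplex $\sigma$ to the image simplex $\psi(\sigma)$, and I would set $\chi(\psi)(\sigma)=\psi(\sigma)$. If $\sigma_1\le\sigma_2$ in $\chi(K)$, that is $\sigma_2\subseteq\sigma_1$, then $\psi(\sigma_2)\subseteq\psi(\sigma_1)$, whence $\chi(\psi)(\sigma_1)\le\chi(\psi)(\sigma_2)$; thus $\chi(\psi)$ is order-preserving and therefore continuous. To compare the two composites I would compute supports: for $x$ in the open face of $\sigma$ with barycentric coordinates $t_v>0$ for $v\in\sigma$, the point $|\psi|(x)$ has positive coordinate $\sum_{\psi(v)=w}t_v$ exactly at the vertices $w\in\psi(\sigma)$, so $\mathrm{supp}(|\psi|(x))=\psi(\sigma)$. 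Hence $h_L(|\psi|(x))=\psi(\sigma)=\chi(\psi)(h_K(x))$, so the square commutes on the nose and in particular $\chi(\psi)\circ h_K\simeq h_L\circ|\psi|$.

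I expect the only genuinely nontrivial step to be the weak-equivalence argument, which leans on two external facts: McCord's basis-like open-cover theorem and the contractibility of open stars in $|K|$. The remaining verifications---continuity of $h_K$, order-preservation of $\chi(\psi)$, and the support computation---are routine bookkeeping.
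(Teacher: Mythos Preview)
Your argument is correct, and in fact you supply far more than the paper does: the paper does not prove this theorem at all. It is stated as a result of McCord, and the only content following it is the one-paragraph description identifying $\chi(K)$ with the face poset (vertices of the barycentric subdivision $K'$, ordered by $b(\sigma)\le b(\sigma')$ iff $\sigma\subseteq\sigma'$) and the remark that $\mathscr{K}(\chi(K))=K'$. Your main argument---support map plus McCord's basis-like open-cover criterion---is exactly McCord's original proof, and your parenthetical alternative (``observe that $\mathscr{K}(\chi(K))$ is the barycentric subdivision'') is precisely the observation the paper records.

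One discrepancy worth noting: you order the face poset by \emph{reverse} inclusion, while the paper uses ordinary inclusion. The order complex is the same either way, so both conventions yield a finite $T_0$-space weakly equivalent to $|K|$; but your specific map $h_K(x)=\mathrm{supp}(x)$ is continuous only under your convention (the preimage of $U_\sigma$ is the open star), whereas under the paper's convention that preimage would be the closed simplex $|\sigma|$. So your proof is internally consistent, just built on the opposite poset to the one the paper names. A bonus of your convention is that you get strict commutativity $\chi(\psi)\circ h_K=h_L\circ|\psi|$, which is stronger than the ``homotopic'' in the paper's statement; the weaker formulation there presumably reflects that with the paper's convention $h_K$ is defined via the identification $|K|\cong|K'|$ and Theorem~\ref{MC1}, and that identification only commutes with $|\psi|$ up to homotopy.
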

Let $K^{\prime}$ be the barycentric subdivision of $K$. $\chi(K)$ is a set whose points are the vertices of $K^{\prime}$, and the partial order is given by $b(\sigma)\leq b(\sigma^{\prime})$ provided $\sigma\subseteq\sigma^{\prime}$, where $b(\cdot)$ denotes the barycenter of the corresponding simplex. It is not hard to see that $\mathscr{K}(\chi(K))=K^{\prime}$.

\section{Construction of the Finite Models }\label{section3}
Let $(X, d)$ be a compact metric space. We first construct a family of quotient spaces of $X$ that is similar to the one in \rm{\cite[p.212]{PB2017}}.

For every $x\in X$ and a positive real number $\epsilon$, $B(x, \epsilon)$ denotes  the open ball containing $x$ with radius $\epsilon$.
Let $\mathcal{U}_n (n\in\mathbb{N}, ~n\geq 1)$ be a finite open cover of $X$ given by the open balls with radius $\frac{1}{n}$. Define a finite open cover $\mathcal{W}_n$ by $$\mathcal{W}_n=\bigcup_{i=1}^n\mathcal{U}_i.$$
Let $I_X$ be the set of all finite covers $\mathcal{W}_n$. 
We define a function $R: I_X\rightarrow \mathbb{R}$ by $R(\mathcal{W}_n)=\frac{1}{n}$.
The set $I_X$ is directed by the inclusion relation, that is, for ${\alpha}, {\beta}\in I_X,$ ${\alpha}\leq {\beta}$ if ${\alpha}\subseteq{\beta}$.

Given a finite cover $\alpha\in I_X$, we consider the set $$U_x^{\alpha}=\bigcap_{x\in U, U\in {\alpha}}U.$$ 
It is clear that $U_x^{\alpha}$ is open. We define a relation $\sim_{\alpha}$ on $X$ as follows. $$x\sim_{\alpha}y \quad\text{if and only if}\quad U_x^{\alpha}=U_y^{\alpha}.$$
This is obviously an equivalence relation. Denote by $X_{\alpha}$ the quotient space with respect to the relation $\sim_{\alpha}$.
If ${\alpha}\leq {\alpha^{\prime}}$, then by the definitions of $\sim_{\alpha}$ and $\sim_{\alpha^{\prime}}$, each class $E$ in $X_{\alpha^{\prime}}$ is contained in a unique class in $X_{\alpha}$. This gives rise to a collection of maps $$\{f_{\alpha}^{\alpha^{\prime}}: X_{\alpha^{\prime}}\rightarrow X_{\alpha},~\alpha\leq \alpha^{\prime}\in I_X\}.$$

Note that each class $E_{\alpha}\in X_{\alpha}$ is a subset of $X$. We write $\overline{{E}_{\alpha}}$ for the closure of $E_{\alpha}$ in $X$.

\begin{Remark}\label{RK1}
{\rm{If $E_{\alpha}\in X_{\alpha}$, then $E_{\alpha}$ is the disjoint union of all classes of $(f_{\alpha}^{\alpha^{\prime}})^{-1}(E_{\alpha})$ whenever $\alpha\leq \alpha^{\prime}$.}}
\end{Remark}
\begin{Remark}{\rm{
	Let $\prod\limits_{\alpha\in I_X} E_{\alpha}\in \prod\limits_{\alpha\in I_X}  X_{\alpha}$ with $f_{\alpha}^{\alpha^{\prime}}(E_{\alpha^{\prime}})=E_{\alpha}$ whenever $\alpha\leq\alpha^{\prime}$.
	By the same argument in  \rm{\cite[p.213]{PB2017}}, $\bigcap_{\alpha\in I_X}\overline{{E}_{\alpha}}$ is non-empty and contains a unique point of $X$.}}
	
\end{Remark}

\begin{Example}\label{EX1}{\rm{
	Let  $(X, d)=[0, 1]$ with the usual Euclidean metric. The index set $I_X$ is given by $\mathcal{U}_n=\{B(\frac{k}{n}, \frac{1}{n}),~0\leq k\leq n\}$.
	Let
	$$\alpha=\mathcal{W}_2=\{B(0, 1)~ B(1, 1),~B(0,\frac{1}{2}),~B(\frac{1}{2},~\frac{1}{2})~,  B(1,\frac{1}{2})\}.$$ Then, $B(0, 1)=[0,1),~B(1, 1)=(0, 1],$ $B(0,\frac{1}{2})=[0, \frac{1}{2})$, $B(\frac{1}{2},~\frac{1}{2})=(0, 1),~B(1,\frac{1}{2})=(\frac{1}{2}, 1]$. Thus we have that
	\begin{align*}
	&U_0^{\alpha}=[0,1)\cap [0, \frac{1}{2})=[0, \frac{1}{2}).\\
	&U_x^{\alpha}=[0,1)\cap (0, 1]\cap [0, \frac{1}{2})\cap (0, 1)=(0, \frac{1}{2}),~\text{if $0<x<\frac{1}{2}$}.\\
	&U_{\frac{1}{2}}^{\alpha}=[0,1)\cap (0, 1]\cap (0, 1)=(0, 1).\\
	&U_x^{\alpha}=[0,1)\cap (0, 1]\cap (0, 1)\cap (\frac{1}{2}, 1]=(\frac{1}{2}, 1),~\text{if $\frac{1}{2}<x<1$}.\\
	&U_1^{\alpha}=(0, 1]\cap (\frac{1}{2}, 1]=(\frac{1}{2}, 1].
	\end{align*}
	It follows that $X_{\alpha}=\{0,~ (0, \frac{1}{2}),~ \frac{1}{2},~ (\frac{1}{2}, 1), 1\}$.
	
	In fact, if $\alpha=\mathcal{W}_n=\cup_{i=1}^{n}\mathcal{U}_i$, after the same calculation as above, we get 
	\begin{equation}\label{formula1}
	X_{\alpha}=\{ \frac{t}{n}, ~ (\frac{k}{n}, \frac{k+1}{n}),~ 0\leq t\leq n, ~ 0\leq k\leq n-1\}.
	\end{equation}}}
\end{Example}

Let $(X, d),~ (Y, d)$ be compact metric spaces.
Consider a directed set $$I_Y^X=\{\beta^{\alpha}~|~\alpha\in I_X, \beta\in I_Y\},$$ which is directed as follows.  $\beta^{\alpha}\leq \bar\beta^{\bar\alpha}$ if $\alpha\leq\bar\alpha$ and $\beta\leq\bar\beta.$ 
 For $\alpha\in I_X, \beta\in I_Y$, let us consider the finite set
$$Y_{\beta}^{X_{\alpha}}=\{f~|\text{~set maps}~f: X_{\alpha}\rightarrow Y_{\beta}\}.$$
The power set $2^{Y_{\beta}^{X_{\alpha}}}$ is endowed with the topology generated by the sets 
$$B_S=\{T\in 2^{Y_{\beta}^{X_{\alpha}}} | S\subseteq T\},~ S\in 2^{Y_{\beta}^{X_{\alpha}}},$$
that is, $\{B_S,~S\in 2^{Y_{\beta}^{X_{\alpha}}}\}$ forms a basis for this topology.
We can see that $2^{Y_{\beta}^{X_{\alpha}}}$ with this topology is a finite $T_0$-space.
\begin{Remark}{\rm{
For $S, T\in 2^{Y_{\beta}^{X_{\alpha}}}$, $T\in B_S$ if and only if $S\subseteq T$.
The partial order on $2^{Y_{\beta}^{X_{\alpha}}}$ induced by this minimal basis is that $T\leq S$ provided $B_T\subseteq B_S$ (equivalently $T\in B_S$).}}
\end{Remark}

Let $Y^X$ be the set of continuous maps from $X$ to $Y$ and the topology on $Y^X$ is the compact-open topology. $Y^X$ is called the mapping space from $X$ to $Y$.
 Given a compact subset $K$ of $X$ and an open subset $N$ of $Y$, let $V(K, N)$ denote the set of all continuous maps $f: X\rightarrow Y$ such that $f(K)\subseteq N$. 

Fix a point $y_0\in Y$. $C_{y_0}\in Y^X$ denotes the constant map  sending all elements of $X$ to $y_0$.
For every  $f\in Y^X$, we assign it to an element $S\in 2^{Y_{\beta}^{X_{\alpha}}}$  as follows. If $f=C_{y_0}$, then $S=\emptyset\in 2^{Y_{\beta}^{X_{\alpha}}}$. If $f\not=C_{y_0}$, then
$S$ is the collection of set maps $g: X_{\alpha}\rightarrow Y_{\beta}$ with the property that for each class $E_{\alpha}\in X_{\alpha}$, $g(E_{\alpha})\in Y_{\beta}$ belongs to the set
\begin{align}\label{D1}
\{F_{\beta}\in Y_{\beta}~|~\overline{f(E_{\alpha})}\cap\overline{F_{\beta}}\not=\emptyset\}.
\end{align}
It is obvious that such $g(E_{\alpha})$ exists, since for every $x\in E_{\alpha},~ [f(x)]_{\beta}\in(\ref{D1})$. 
This yields a  map $p_{\beta}^{\alpha}: Y^{X}\rightarrow 2^{Y_{\beta}^{X_{\alpha}}}$.

\begin{Example}\label{EX2}{\rm{
	Let $(X, d)=(Y, d)=[0,1]$ with the usual Euclidean metric and let $I_X, I_Y$ be given by the same $\mathcal{U}_n$ as in Example \ref{EX1}.
	Let us select
	 $\alpha=\mathcal{W}_1\in I_X,~ \beta=\mathcal{W}_1\in I_Y$. Then
	$X_{\alpha}=Y_{\beta}=\{0,~ (0, 1),~1\}$. Let $f$ be the identity map. Consider $p_{\beta}^{\alpha}: Y^{X}\rightarrow 2^{Y_{\beta}^{X_{\alpha}}}$.
We will write the image $p_{\beta}^{\alpha}(f)$ explicitly.  According to (\ref{D1}), for any $g\in p_{\beta}^{\alpha}(f)$,
	\begin{align*}
	&g(0)\in\{0,~ (0, 1)\}.\\
	&g((0, 1))\in\{0,~ (0, 1),~1\}.\\
	&g(1)\in\{ (0, 1),~1\}.
	\end{align*}
	Thus  $p_{\beta}^{\alpha}(f)$ has 12 elements:
\begin{align*}
&g_1(0)=0, & &g_1((0, 1))=0, & &g_1(1)=(0, 1). \\
&g_2(0)=0, & &g_2((0, 1))=0, & &g_2(1)=1.\\
&g_3(0)=0, &&g_3((0, 1))=(0, 1),  &&g_3(1)=(0, 1).\\
 &g_4(0)=0, &&g_4((0, 1))=(0, 1),  &&g_4(1)=1.\\
&g_5(0)=0, &&g_5((0, 1))=1,  &&g_5(1)=(0, 1). \\
&g_6(0)=0, &&g_6((0, 1))=1,  &&g_6(1)=1.\\
&g_7(0)=(0, 1), &&g_7((0, 1))=0,  &&g_7(1)=(0, 1).\\
 &g_8(0)=(0, 1), &&g_8((0, 1))=0,  &&g_8(1)=1.\\
&g_9(0)=(0, 1), &&g_9((0, 1))=(0, 1), &&g_9(1)=(0, 1). \\
&g_{10}(0)=(0, 1), &&g_{10}((0, 1))=(0, 1), &&g_{10}(1)=1.\\
&g_{11}(0)=(0, 1), &&g_{11}((0, 1))=1, &&g_{11}(1)=(0, 1).\\
 &g_{12}(0)=(0, 1), &&g_{12}((0, 1))=1, &&g_{12}(1)=1.
\end{align*}}}
\end{Example}

\begin{Lemma}\label{1.2}
	$p_{\beta}^{\alpha}: Y^{X}\rightarrow 2^{Y_{\beta}^{X_{\alpha}}}$ is continuous.
\end{Lemma}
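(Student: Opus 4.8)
The plan is to check continuity pointwise, using the standard fact that a map into a finite $T_0$-space is continuous if and only if every point $f_0$ of the source has a neighbourhood that $p_\beta^\alpha$ sends into the minimal open set of its image $p_\beta^\alpha(f_0)$; since the sets $B_S$ form a basis, this amounts to comparing the subset $p_\beta^\alpha(f)\subseteq Y_\beta^{X_\alpha}$ with $p_\beta^\alpha(f_0)$ for $f$ near $f_0$ in the compact-open topology. First I would rewrite the defining condition (\ref{D1}) in terms of compact sets: because $X$ is compact, each closure $\overline{E_\alpha}$ is compact, and continuity of $f$ gives $\overline{f(E_\alpha)}=f(\overline{E_\alpha})$, so that $\overline{f(E_\alpha)}\cap\overline{F_\beta}$ is an intersection of two compact subsets of $Y$. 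Writing $A_{E_\alpha}(f)=\{F_\beta\in Y_\beta\mid \overline{f(E_\alpha)}\cap\overline{F_\beta}\neq\emptyset\}$, the set $p_\beta^\alpha(f)$ is exactly the collection of set maps $g$ with $g(E_\alpha)\in A_{E_\alpha}(f)$ for every class $E_\alpha$.

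The heart of the argument is a stability statement: for each $f_0$ there is a compact-open neighbourhood $V$ of $f_0$ such that $A_{E_\alpha}(f)\subseteq A_{E_\alpha}(f_0)$ for all classes $E_\alpha$ and all $f\in V$. To establish it I would fix a pair $(E_\alpha,F_\beta)$ with $F_\beta\notin A_{E_\alpha}(f_0)$; then the compacta $f_0(\overline{E_\alpha})$ and $\overline{F_\beta}$ are disjoint and hence separated by some distance $\delta>0$. Letting $N$ be the open $\frac{\delta}{2}$-neighbourhood of $f_0(\overline{E_\alpha})$ in $Y$, the subbasic compact-open set $V(\overline{E_\alpha},N)$ is a neighbourhood of $f_0$, and every $f\in V(\overline{E_\alpha},N)$ satisfies $f(\overline{E_\alpha})\subseteq N$, which stays disjoint from $\overline{F_\beta}$; thus $F_\beta\notin A_{E_\alpha}(f)$ on this neighbourhood. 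As $X_\alpha$ and $Y_\beta$ are finite there are only finitely many such pairs, so intersecting the finitely many neighbourhoods produces a single $V$ on which every forbidden value stays forbidden, giving the desired inclusions.

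Finally I would combine the inclusions $A_{E_\alpha}(f)\subseteq A_{E_\alpha}(f_0)$ into $p_\beta^\alpha(f)\subseteq p_\beta^\alpha(f_0)$ for all $f\in V$; interpreting this containment through the order on $2^{Y_\beta^{X_\alpha}}$ shows that $p_\beta^\alpha$ maps $V$ into the minimal open set determined by $p_\beta^\alpha(f_0)$, which is precisely continuity at $f_0$. The lone value needing separate mention is $\emptyset$, attained only at the constant map $C_{y_0}$; there I would invoke the neighbourhood criterion directly for this distinguished element. I expect the genuine obstacle to be the stability step, i.e.\ turning the assertion that two disjoint compacta remain disjoint into an honest compact-open neighbourhood: this is exactly where compactness of $X$ (so that $\overline{E_\alpha}$ is compact and $f(\overline{E_\alpha})$ closed) and finiteness of $X_\alpha$ and $Y_\beta$ (so that only finitely many neighbourhoods are intersected) are indispensable. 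The remaining passage between set-inclusions and the order of the finite model is routine.
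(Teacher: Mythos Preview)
Your stability step is correctly argued, but it produces the inclusion in the wrong direction. In the paper's topology the basic open sets are $B_S=\{T:S\subseteq T\}$, so the minimal open neighbourhood of $S_0=p_\beta^\alpha(f_0)$ is $B_{S_0}=\{T:S_0\subseteq T\}$. Continuity at $f_0$ therefore demands a neighbourhood $V$ with $S_0\subseteq p_\beta^\alpha(f)$ for every $f\in V$: every $g$ that was admissible for $f_0$ must remain admissible for $f$. What you prove is that \emph{forbidden values stay forbidden}, yielding $A_{E_\alpha}(f)\subseteq A_{E_\alpha}(f_0)$ and hence $p_\beta^\alpha(f)\subseteq S_0$. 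That is the reverse containment; it places $S_0$ in $B_{p_\beta^\alpha(f)}$, not $p_\beta^\alpha(f)$ in $B_{S_0}$, and so does not give continuity for the topology at hand.

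The paper argues in the opposite direction, as it must. For each $g\in S_0$ and each class $E_\alpha^i$ it exploits the hypothesis $\overline{f(E_\alpha^i)}\cap\overline{g(E_\alpha^i)}\neq\emptyset$ to pick a nonempty compact witness $K_g^i\subseteq\overline{E_\alpha^i}$ whose $f$-image already lies in a small ball around $g(E_\alpha^i)$; intersecting the finitely many subbasic sets $V(K_g^i,B(g(E_\alpha^i),n_g^i))$ over all $g\in S_0$ and all $i$ gives the neighbourhood $V$. For $h\in V$ the witness supplies a point $x\in\overline{E_\alpha^i}$ with $h(x)$ close to $g(E_\alpha^i)$, from which one argues that $g(E_\alpha^i)$ still satisfies the intersection condition for $h$, i.e.\ $g\in p_\beta^\alpha(h)$. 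In short, the scheme required is ``allowed values stay allowed'', carried by explicit witnesses through the perturbation, rather than the ``disjoint compacta stay disjoint'' mechanism you set up.
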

\begin{proof}
	Let $p_{\beta}^{\alpha}(f)=S$. To show that $p_{\beta}^{\alpha}$ is continuous at the point $f$, let us consider the minimal open set $B_{S}$ of $S$.
	If $f=C_{y_0}$, then $B_S=B_{\emptyset}=2^{Y_{\beta}^{X_{\alpha}}}$, $\emptyset\in p_{\beta}^{\alpha}(Y^{X})\subseteq B_{\emptyset}$. Thus $p_{\beta}^{\alpha}$ is continuous at the point $C_{y_0}$. Now we consider the case $f\not=C_{y_0}$.
	 Since $X_{\alpha}$ is a finite topological space, we can suppose that $X_{\alpha}=\{E^1_{\alpha},\cdots, E^n_{\alpha}\}$. Given any $g\in S$, let $n^i_g$ be the minimal positive real number of
	 $$\{d({g(E^i_{\alpha})},  {{F}_{\beta}}), F_{\beta}\in Y_{\beta} \}.$$
	 If no such $n^i_g$ exists,   define $n^i_g=1$. The choice of $g(E^i_{\alpha})$ allows us to choose a non-empty compact set
 $K_g^i\subseteq f^{-1}(B({g(E^i_{\alpha})},n^i_g))\cap  \overline E_{\alpha}^i$.
  
	  Define 
	$$ V=\bigcap_{1\leq i\leq n,~g\in S} V(K^i_g, B({g(E^i_{\alpha})},n^i_g)).$$
	We claim that  $V$ is the desired open neighborhood of $f$ that satisfies $$S\in p_{\beta}^{\alpha}(V)\subseteq B_{S}.$$ It suffices to show that for any given $h\in V$, $S\subseteq p_{\beta}^{\alpha}(h)$. Given a map $g\in S$, from the construction of $V$, we see that  there exists a point $x\in K^i_g\subseteq \overline E^i_{\alpha}$ with 
	$$ h(x)\in B({g(E^i_{\alpha})},n^i_g).$$
	 Thus by the choice of $n_g^i$, $d([h(x)]_{\beta}, {g(E^i_{\alpha})})=0$, and then $\overline{[h(x)]_{\beta}}\cap \overline{g(E^i_{\alpha})}\not=\emptyset$, which implies
	  $g\in p_{\beta}^{\alpha}(h)$. 
\end{proof}

 We shall construct a collection of continuous maps 
 $f^{\tilde\alpha\tilde \beta}_{\alpha\beta}: 2^{Y_{\tilde\beta}^{X_{\tilde\alpha}}}\rightarrow 2^{Y_{\beta}^{X_{\alpha}}}$ whenever $\beta^{\alpha}\leq \tilde\beta^{\tilde\alpha}\in I^X_Y$. For each $S_{\tilde\beta}^{\tilde\alpha}\in 2^{Y_{\tilde\beta}^{X_{\tilde\alpha}}}$, 
 we assign it to an element $S_{\beta}^{\alpha}$ of $2^{Y_{\beta}^{X_{\alpha}}}$ as follows.
 If $S_{\tilde\beta}^{\tilde\alpha}=\emptyset$, then $S_{\beta}^{\alpha}=\emptyset$.
 If $S_{\tilde\beta}^{\tilde\alpha}\not=\emptyset$, then
 $S_{\beta}^{\alpha}$ consists of set maps $g: X_{\alpha}\rightarrow Y_{\beta}$ satisfying that for each class $E_{\alpha}$,  the possible values of $g(E_{\alpha})$ are given by the  set
 \begin{align}\label{D2}
 &\left.\{F_{\beta}\in Y_{\beta}~|~\text{there exists a class $E_{\tilde\alpha}\in (f^{\tilde\alpha}_{\alpha})^{-1}(E_{\alpha})$ and $\tilde{h}\in S_{\tilde\beta}^{\bar\alpha}$ such that}\right.\\ &\left. \tilde{h}(E_{\tilde\alpha})\in(f^{\tilde\beta}_{\beta})^{-1}(F_{\beta}),~\text{that is}, ~ \tilde{h}(E_{\tilde\alpha})\subseteq F_{\beta} \right.\}.\nonumber
 \end{align}

\begin{Lemma}
 $f^{\tilde\alpha\tilde \beta}_{\alpha\beta}: 2^{Y_{\tilde\beta}^{X_{\tilde\alpha}}}\rightarrow 2^{Y_{\beta}^{X_{\alpha}}}$  is continuous.
\end{Lemma}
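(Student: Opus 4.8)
The plan is to reduce continuity to a purely order-theoretic statement. Both $2^{Y_{\tilde\beta}^{X_{\tilde\alpha}}}$ and $2^{Y_{\beta}^{X_{\alpha}}}$ are finite $T_0$-spaces, so by the characterization recalled in Section \ref{section2} a map between them is continuous if and only if it is order preserving. Hence it suffices to check that $f^{\tilde\alpha\tilde\beta}_{\alpha\beta}$ preserves the partial order.

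First I would pin down the order precisely, since everything hinges on its direction. From the basis $\{B_S\}$, the minimal open set of $S$ is $B_S=\{T\mid S\subseteq T\}$, and therefore $T\leq S$ iff $B_T\subseteq B_S$ iff $S\subseteq T$; that is, the poset structure on each of these power sets is \emph{reverse inclusion}, with $A\leq B$ precisely when $B\subseteq A$. Consequently, $f^{\tilde\alpha\tilde\beta}_{\alpha\beta}$ being order preserving is equivalent to its being \emph{monotone for ordinary set inclusion}: unwinding the requirement ``$A\leq B$ implies $f(A)\leq f(B)$'' under reverse inclusion, and relabeling the smaller-inclusion set as $S$ and the larger as $S'$, the condition to verify becomes exactly $S\subseteq S'\ \Rightarrow\ f^{\tilde\alpha\tilde\beta}_{\alpha\beta}(S)\subseteq f^{\tilde\alpha\tilde\beta}_{\alpha\beta}(S')$.

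The core of the argument is to read this inclusion-monotonicity off the defining formula (\ref{D2}). Given $S\subseteq S'$ in $2^{Y_{\tilde\beta}^{X_{\tilde\alpha}}}$, if $S=\emptyset$ then $f^{\tilde\alpha\tilde\beta}_{\alpha\beta}(S)=\emptyset\subseteq f^{\tilde\alpha\tilde\beta}_{\alpha\beta}(S')$ and we are done; otherwise both are nonempty. For a fixed class $E_{\alpha}\in X_{\alpha}$, write $\mathcal{V}_S(E_{\alpha})$ for the admissible value set of $g(E_{\alpha})$ given by (\ref{D2}), namely those $F_{\beta}$ for which some $E_{\tilde\alpha}\in(f^{\tilde\alpha}_{\alpha})^{-1}(E_{\alpha})$ and some $\tilde h\in S$ satisfy $\tilde h(E_{\tilde\alpha})\subseteq F_{\beta}$. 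Passing from $S$ to the larger $S'$ only enlarges the pool of witnesses $\tilde h$, so $\mathcal{V}_S(E_{\alpha})\subseteq\mathcal{V}_{S'}(E_{\alpha})$ for every class $E_{\alpha}$. Since $f^{\tilde\alpha\tilde\beta}_{\alpha\beta}(S)$ consists exactly of the set maps $g$ with $g(E_{\alpha})\in\mathcal{V}_S(E_{\alpha})$ for all $E_{\alpha}$, every such $g$ automatically satisfies $g(E_{\alpha})\in\mathcal{V}_{S'}(E_{\alpha})$ and hence lies in $f^{\tilde\alpha\tilde\beta}_{\alpha\beta}(S')$. This gives $f^{\tilde\alpha\tilde\beta}_{\alpha\beta}(S)\subseteq f^{\tilde\alpha\tilde\beta}_{\alpha\beta}(S')$, i.e. order preservation, and therefore continuity.

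The only delicate bookkeeping is keeping the reverse-inclusion convention explicit so that the direction of the implication is not accidentally flipped; this is the single point where an error could enter, and where I would be most careful. Everything else rests on the elementary observation that the value set in (\ref{D2}) is monotone in $S$. I would also note in passing that the same monotonicity, together with the fact that $(f^{\tilde\alpha}_{\alpha})^{-1}(E_{\alpha})$ is nonempty and each $\tilde h(E_{\tilde\alpha})$ lies in a unique class $F_{\beta}=f^{\tilde\beta}_{\beta}(\tilde h(E_{\tilde\alpha}))$, shows $\mathcal{V}_S(E_{\alpha})\neq\emptyset$, so the assignment sends nonempty sets to nonempty sets; but this is only a well-definedness remark and plays no role in the continuity claim.
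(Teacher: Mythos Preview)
Your proof is correct and follows essentially the same route as the paper: both arguments reduce to the inclusion-monotonicity $S\subseteq S'\Rightarrow f^{\tilde\alpha\tilde\beta}_{\alpha\beta}(S)\subseteq f^{\tilde\alpha\tilde\beta}_{\alpha\beta}(S')$, which is immediate from the fact that enlarging $S$ only enlarges the value sets in (\ref{D2}). The paper phrases this as $f^{\tilde\alpha\tilde\beta}_{\alpha\beta}(B_S)\subseteq B_{f^{\tilde\alpha\tilde\beta}_{\alpha\beta}(S)}$ using minimal open sets directly, whereas you invoke the equivalent order-preservation criterion from Section~\ref{section2}; the substance is identical, and your explicit unwinding of the reverse-inclusion convention and of why the value sets are monotone in $S$ makes the key step more transparent than the paper's one-line assertion.
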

\begin{proof}
Let $f^{\tilde\alpha\tilde \beta}_{\alpha\beta}(S_{\tilde\beta}^{\tilde\alpha})=S_{\beta}^{\alpha}$. Consider the minimal open set $B_{S_{\beta}^{\alpha}}$  of $S_{\beta}^{\alpha}$.
If $S_{\tilde\beta}^{\tilde\alpha}=\emptyset$, $\emptyset\in f^{\tilde\alpha\tilde \beta}_{\alpha\beta}(B_{\emptyset})= f^{\tilde\alpha\tilde \beta}_{\alpha\beta}(2^{Y_{\tilde\beta}^{X_{\tilde\alpha}}})\subseteq 2^{Y_{\beta}^{X_{\alpha}}}= B_{\emptyset}$. Then $f^{\tilde\alpha\tilde \beta}_{\alpha\beta}$ is continuous at $\emptyset$.
If $S_{\tilde\beta}^{\tilde\alpha}\not=\emptyset$,
 for any $\widetilde{T}\in B_{S_{\tilde\beta}^{\tilde\alpha}}$, $S_{\tilde\beta}^{\tilde\alpha}\subseteq \widetilde{T}$ means 
$$S_{\beta}^{\alpha}\subseteq f^{\tilde\alpha\tilde \beta}_{\alpha\beta}(\widetilde{T}).$$
Therefore, $S_{\beta}^{\alpha}\in f^{\tilde\alpha\tilde \beta}_{\alpha\beta}(B_{S_{\tilde\beta}^{\tilde\alpha}})\subseteq B_{S_{\beta}^{\alpha}}$.
\end{proof}

Let $W_{\beta}^{\alpha}=p_{\beta}^{\alpha} (Y^{X})$ be a subspace of $2^{Y_{\beta}^{X_{\alpha}}}$.
We will prove that $f^{\tilde\alpha\tilde \beta}_{\alpha\beta}$ can be restricted to a continuous map $f^{\tilde\alpha\tilde \beta}_{\alpha\beta}: W_{\tilde\beta}^{\tilde\alpha}\rightarrow W_{\beta}^{\alpha}$.

\begin{Lemma}\label{CCC}
	$f^{\tilde\alpha\tilde \beta}_{\alpha\beta}(W_{\tilde\beta}^{\tilde\alpha})=W_{\beta}^{\alpha}$.      
\end{Lemma}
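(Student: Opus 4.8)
The plan is to deduce the lemma from the more fundamental fact that the maps $p^{\alpha}_{\beta}$ form a compatible cone over the system, namely that
$$
f^{\tilde\alpha\tilde\beta}_{\alpha\beta}\circ p^{\tilde\alpha}_{\tilde\beta}=p^{\alpha}_{\beta}
$$
as maps $Y^{X}\rightarrow 2^{Y_{\beta}^{X_{\alpha}}}$, whenever $\beta^{\alpha}\leq\tilde\beta^{\tilde\alpha}$. Granting this identity, the lemma follows at once: since $W^{\tilde\alpha}_{\tilde\beta}=p^{\tilde\alpha}_{\tilde\beta}(Y^{X})$ and $W^{\alpha}_{\beta}=p^{\alpha}_{\beta}(Y^{X})$ by definition, one gets
$$
f^{\tilde\alpha\tilde\beta}_{\alpha\beta}(W^{\tilde\alpha}_{\tilde\beta})=f^{\tilde\alpha\tilde\beta}_{\alpha\beta}\big(p^{\tilde\alpha}_{\tilde\beta}(Y^{X})\big)=p^{\alpha}_{\beta}(Y^{X})=W^{\alpha}_{\beta}.
$$
Thus the entire content of the statement is the commutativity identity, which I would verify pointwise at each $f\in Y^{X}$.

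The case $f=C_{y_0}$ is immediate, since both sides send $f$ to $\emptyset$. For $f\neq C_{y_0}$, fix a class $E_{\alpha}\in X_{\alpha}$ and compare the two sets of admissible values prescribed for $g(E_{\alpha})$. On the one side, $p^{\alpha}_{\beta}(f)$ allows exactly those $F_{\beta}$ with $\overline{f(E_{\alpha})}\cap\overline{F_{\beta}}\neq\emptyset$, as in $(\ref{D1})$. On the other side, writing $S^{\tilde\alpha}_{\tilde\beta}=p^{\tilde\alpha}_{\tilde\beta}(f)$ and feeding it into $(\ref{D2})$, the map $f^{\tilde\alpha\tilde\beta}_{\alpha\beta}(S^{\tilde\alpha}_{\tilde\beta})$ allows exactly those $F_{\beta}$ for which there exist $E_{\tilde\alpha}\in (f^{\tilde\alpha}_{\alpha})^{-1}(E_{\alpha})$ and $\tilde{h}\in S^{\tilde\alpha}_{\tilde\beta}$ with $\tilde{h}(E_{\tilde\alpha})\subseteq F_{\beta}$. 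The core tools will be Remark \ref{RK1} and its analogue for $Y$, which give $E_{\alpha}=\bigsqcup_{E_{\tilde\alpha}\in (f^{\tilde\alpha}_{\alpha})^{-1}(E_{\alpha})}E_{\tilde\alpha}$ and $F_{\beta}=\bigsqcup_{F_{\tilde\beta}\in (f^{\tilde\beta}_{\beta})^{-1}(F_{\beta})}F_{\tilde\beta}$ as finite disjoint unions, so that closures distribute: $\overline{f(E_{\alpha})}=\bigcup_{E_{\tilde\alpha}}\overline{f(E_{\tilde\alpha})}$ and $\overline{F_{\beta}}=\bigcup_{F_{\tilde\beta}}\overline{F_{\tilde\beta}}$.

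For the inclusion ``$(\ref{D2})$-set $\subseteq$ $(\ref{D1})$-set'', I would take a witness pair $(E_{\tilde\alpha},\tilde h)$; since $\tilde h\in p^{\tilde\alpha}_{\tilde\beta}(f)$, the class $\tilde h(E_{\tilde\alpha})=F_{\tilde\beta}$ satisfies $\overline{f(E_{\tilde\alpha})}\cap\overline{F_{\tilde\beta}}\neq\emptyset$, and from $E_{\tilde\alpha}\subseteq E_{\alpha}$, $F_{\tilde\beta}\subseteq F_{\beta}$ one gets $\overline{f(E_{\alpha})}\cap\overline{F_{\beta}}\supseteq\overline{f(E_{\tilde\alpha})}\cap\overline{F_{\tilde\beta}}\neq\emptyset$, by monotonicity of closure. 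The reverse inclusion is the step I expect to be the main obstacle, as it requires producing the witnessing data: starting from a point $z\in\overline{f(E_{\alpha})}\cap\overline{F_{\beta}}$, the finite-union decompositions above yield $E_{\tilde\alpha}\subseteq E_{\alpha}$ with $z\in\overline{f(E_{\tilde\alpha})}$ and $F_{\tilde\beta}\subseteq F_{\beta}$ with $z\in\overline{F_{\tilde\beta}}$, whence $\overline{f(E_{\tilde\alpha})}\cap\overline{F_{\tilde\beta}}\neq\emptyset$; I then define $\tilde h\in S^{\tilde\alpha}_{\tilde\beta}$ by setting $\tilde h(E_{\tilde\alpha})=F_{\tilde\beta}$ and assigning every other class an admissible value (these exist, as observed after $(\ref{D1})$), giving $\tilde h(E_{\tilde\alpha})\subseteq F_{\beta}$ as required. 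Since $E_{\alpha}$ was arbitrary, the two admissible-value sets coincide class by class, establishing the commutativity identity and hence the lemma.
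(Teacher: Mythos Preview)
Your proposal is correct and follows essentially the same approach as the paper: both arguments establish the pointwise identity $f^{\tilde\alpha\tilde\beta}_{\alpha\beta}\circ p^{\tilde\alpha}_{\tilde\beta}=p^{\alpha}_{\beta}$ by comparing, for each class $E_{\alpha}$, the admissible-value sets coming from $(\ref{D1})$ and $(\ref{D2})$, using Remark~\ref{RK1} and the distributivity of closure over the resulting finite unions. The only difference is organizational---you state the commutativity identity up front and derive the lemma from it, whereas the paper proves the set equality $f^{\tilde\alpha\tilde\beta}_{\alpha\beta}(S^{\tilde\alpha}_{\tilde\beta})=S^{\alpha}_{\beta}$ directly and records the commutative diagram as a consequence afterward.
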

 \begin{proof}
 We keep the notations used in (\ref{D1}) and (\ref{D2}) in this proof. Let $S_{\tilde\beta}^{\tilde\alpha}\in W_{\tilde\beta}^{\tilde\alpha}$.
 If $S_{\tilde\beta}^{\tilde\alpha}=\emptyset$, then $f^{\tilde\alpha\tilde \beta}_{\alpha\beta}(\emptyset)=\emptyset\in W_{\beta}^{\alpha}$.
 If $S_{\tilde\beta}^{\tilde\alpha}\not=\emptyset$, then   $p_{\tilde\beta}^{\tilde\alpha}(f)=S_{\tilde\beta}^{\tilde\alpha}$ for some $f\not=C_{y_0}\in Y^X$. 
 Let $p_{\beta}^{\alpha}(f)=S_{\beta}^{\alpha}$. We claim that $f^{\tilde\alpha\tilde \beta}_{\alpha\beta}(S_{\tilde\beta}^{\tilde\alpha})=S_{\beta}^{\alpha}$.  

 For any $g\in S_{\beta}^{\alpha}$, we only need to consider its value at class $E_{\alpha}$. Assume that $g(E_{\alpha})=F_{\beta}$. By Remark \ref{RK1}, we have 
 $$\overline{f((f_{\alpha}^{\tilde\alpha})^{-1}(E_{\alpha}))}\cap (f_{\beta}^{\tilde\beta})^{-1}(\overline{F_{\beta}})\not=\emptyset.$$
 Then there exists a class $E_{\tilde{\alpha}}\in (f_{\alpha}^{\tilde\alpha})^{-1}(E_{\alpha})$ and a class
$ F_{\tilde{\beta}}\in (f_{\beta}^{\tilde\beta})^{-1}(F_{\beta})$ such that $\overline{f(E_{\tilde{\alpha}})}\cap \overline F_{\tilde\beta}\not=\emptyset$.
 This is equivalent to saying that $(f_{\beta}^{\tilde\beta})^{-1}(F_{\beta})$ contains a class of $Y_{\tilde\beta}$ which is the image of a map from $S_{\tilde\beta}^{\tilde\alpha}$ at $E_{\tilde{\alpha}}\in (f_{\alpha}^{\tilde\alpha})^{-1}(E_{\alpha})$.
 Thus $g\in f^{\tilde\alpha\tilde \beta}_{\alpha\beta}(S_{\tilde\beta}^{\tilde\alpha})$. 
 On the other hand, if $g\in f^{\tilde\alpha\tilde \beta}_{\alpha\beta}(S_{\tilde\beta}^{\tilde\alpha})$, then by (\ref{D2}), for each $E_{\alpha}\in X_{\alpha}$, there exists $E_{\tilde{\alpha}}\in (f_{\alpha}^{\tilde\alpha})^{-1}(E_{\alpha})$ and $\tilde{h}\in S_{\tilde\beta}^{\bar\alpha}$ with $\tilde{h}(E_{\tilde\alpha})\subseteq g(E_{\alpha})$.
 Then $\overline{g(E_{\alpha})}\cap \overline{f(E_{\alpha})}\not=\emptyset$ because $\overline{\tilde{h}(E_{\tilde\alpha})}\cap \overline{f(E_{\tilde\alpha})}\not=\emptyset$. Therefore, $g\in S_{\beta}^{\alpha}$.
 
\end{proof}

Indeed, in the proof of Lemma \ref{CCC}, we prove the commutativity of the following diagram.
 \begin{equation*}\label{diagram1}
\xymatrix{Y^X\ar[d]_{p_{\tilde\beta}^{\tilde\alpha}}\ar[dr]^{p_{\beta}^{\alpha}} \\
2^{Y_{\tilde\beta}^{X_{\tilde\alpha}}} \ar[r]^{f_{\alpha\beta}^{\tilde\alpha\tilde\beta}}  & 2^{Y_{\beta}^{X_{\alpha}}}.\\
}
\end{equation*}

We now extend the identity map $id: W_{\beta}^{\alpha}\rightarrow W_{\beta}^{\alpha}$
to a continuous map $r_{\beta}^{\alpha}: 2^{Y_{\beta}^{X_{\alpha}}}\rightarrow W_{\beta}^{\alpha}$.
Let $S\in( W_{\beta}^{\alpha})^c$. 
 $r_{\beta}^{\alpha}(S)$ is defined to be the minimum of the totally ordered set
 $$ 2^S\cap W_{\beta}^{\alpha},$$ 
 that is, the set of maximum cardinality.
  $2^S\cap W_{\beta}^{\alpha}$ is non-empty because $\emptyset\in 2^S\cap W_{\beta}^{\alpha}$.
  \begin{Lemma}
  	$r_{\beta}^{\alpha}: 2^{Y_{\beta}^{X_{\alpha}}}\rightarrow W_{\beta}^{\alpha}$ is continuous.
  \end{Lemma}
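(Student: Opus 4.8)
The plan is to reduce continuity to a monotonicity statement. By the correspondence recalled in Section~\ref{section2}, a map between finite $T_0$-spaces is continuous exactly when it is order preserving, and by the Remark describing the minimal basis the partial order on $2^{Y_\beta^{X_\alpha}}$ is reverse inclusion, namely $T\leq S$ iff $S\subseteq T$. Consequently, order-preservation of $r_\beta^\alpha$ is precisely the assertion that $r_\beta^\alpha$ is monotone for inclusion, i.e. that $S\subseteq S'$ implies $r_\beta^\alpha(S)\subseteq r_\beta^\alpha(S')$, and it is this inclusion that I would establish.

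First I would record a single formula valid on all of $2^{Y_\beta^{X_\alpha}}$. For $S\in W_\beta^\alpha$ the family $2^S\cap W_\beta^\alpha$ has $S$ itself as its $\subseteq$-largest member, so the prescribed value $r_\beta^\alpha(S)=S$ coincides with the element of maximum cardinality of $2^S\cap W_\beta^\alpha$; for $S\in (W_\beta^\alpha)^c$ this is the definition. Thus in every case $r_\beta^\alpha(S)$ is the $\subseteq$-maximum of $2^S\cap W_\beta^\alpha$. In particular $r_\beta^\alpha(S)\in W_\beta^\alpha$, so the map does land in $W_\beta^\alpha$, and, being the minimum of a totally ordered set, $r_\beta^\alpha(S)$ contains every member of $2^S\cap W_\beta^\alpha$.

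Granting this description, the monotonicity is immediate. If $S\subseteq S'$ then $2^S\cap W_\beta^\alpha\subseteq 2^{S'}\cap W_\beta^\alpha$, so $r_\beta^\alpha(S)$ is itself a member of $2^{S'}\cap W_\beta^\alpha$; since $r_\beta^\alpha(S')$ is the maximum of this larger family, $r_\beta^\alpha(S)\subseteq r_\beta^\alpha(S')$. Read through the reverse-inclusion order this says $r_\beta^\alpha$ is order preserving, hence continuous. Equivalently, to match the style of the preceding lemmas, I would verify $r_\beta^\alpha(B_S)\subseteq B_{r_\beta^\alpha(S)}$ for the minimal open set $B_S$ of each point $S$, which unwinds to the very same inclusion.

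The step I expect to carry the real weight is the well-definedness invoked in the second paragraph: that $2^S\cap W_\beta^\alpha$ possesses a $\subseteq$-maximum, equivalently that it is totally ordered as claimed in the definition of $r_\beta^\alpha$, so that the element of maximum cardinality genuinely dominates all the others. The monotonicity argument is empty without it, so this is where I would concentrate. Here I would use the structural feature that each nonempty $S\in W_\beta^\alpha$ is the rectangle $\{g : g(E_\alpha)\in A_{E_\alpha}^{f}\ \text{for every}\ E_\alpha\in X_\alpha\}$ cut out by the sets $A_{E_\alpha}^f=\{F_\beta : \overline{f(E_\alpha)}\cap\overline{F_\beta}\neq\emptyset\}$, and examine how two such rectangles contained in a common $S$ must be related; this comparability of the rectangular elements of $W_\beta^\alpha$ lying below a given $S$ is the delicate point on which the whole construction rests.
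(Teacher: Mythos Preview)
Your argument is correct and is essentially the paper's own proof stated in order-theoretic language: where the paper performs a four-case analysis to verify $r_\beta^\alpha(B_S)\subseteq B_{r_\beta^\alpha(S)}\cap W_\beta^\alpha$, you observe this unwinds to the single monotonicity $S\subseteq T\Rightarrow r_\beta^\alpha(S)\subseteq r_\beta^\alpha(T)$, which both proofs obtain from the description of $r_\beta^\alpha(S)$ as the $\subseteq$-maximum of $2^S\cap W_\beta^\alpha$. You are also right that the total ordering of $2^S\cap W_\beta^\alpha$ is asserted in the definition but not proved in the paper, and that the paper's continuity argument, like yours, tacitly relies on it.
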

\begin{proof}
 To verify the continuity of $r_{\beta}^{\alpha}$, we
consider the minimal open set $B_{r_{\beta}^{\alpha}(S)}\cap W_{\beta}^{\alpha}$ of $r_{\beta}^{\alpha}(S)$. We will prove  $r_{\beta}^{\alpha}(B_S)\subseteq B_{r_{\beta}^{\alpha}(S)}\cap W_{\beta}^{\alpha}$.

Let $S\in( W_{\beta}^{\alpha})^c$.  If $T\in B_S\cap  W_{\beta}^{\alpha}$, then $$r_{\beta}^{\alpha}(S)\subseteq S\subseteq T=r_{\beta}^{\alpha}(T),$$ so $r_{\beta}^{\alpha}(T)\in B_{r_{\beta}^{\alpha}(S)}\cap W_{\beta}^{\alpha}.$ If $T\in B_S\cap  (W_{\beta}^{\alpha})^c$, we have $2^S\cap W_{\beta}^{\alpha}\subseteq 2^T\cap W_{\beta}^{\alpha},$ then $r_{\beta}^{\alpha}(S)\subseteq r_{\beta}^{\alpha}(T),$ so $r_{\beta}^{\alpha}(T)\in B_{r_{\beta}^{\alpha}(S)}\cap W_{\beta}^{\alpha}.$
For $S\in  W_{\beta}^{\alpha}$,~ $r_{\beta}^{\alpha}(S)=S$. If $T\in B_S\cap  W_{\beta}^{\alpha}$, then $r_{\beta}^{\alpha}(T)=T\in B_S\cap W_{\beta}^{\alpha}$. If $T\in B_S\cap  (W_{\beta}^{\alpha})^c$, we have $$S\in 2^T\cap W_{\beta}^{\alpha},$$  then $S\subseteq r_{\beta}^{\alpha}(T)$ and so   $r_{\beta}^{\alpha}(T)\in B_S\cap W_{\beta}^{\alpha}$.
\end{proof}

It is straightforward to check the commutativity of the following diagram.
\begin{equation*}
\xymatrix{  2^{Y_{\tilde\beta}^{X_{\tilde\alpha}}}\ar[r]^{f_{\alpha\beta}^{\tilde\alpha\tilde\beta}}
	\ar[d]_{r_{\tilde\beta}^{\tilde\alpha}} &  2^{Y_{\beta}^{X_{\alpha}}}\ar[d]^{r_{\beta}^{\alpha}}\\
	W_{\tilde\beta}^{\tilde\alpha}\ar[r]^{f_{\alpha\beta}^{\tilde\alpha\tilde\beta}}&W_{\beta}^{\alpha}.
}
\end{equation*}
Hence, we get an inverse system of finite $T_0$-spaces, $$\mathscr{A}=\{2^{Y_{\beta}^{X_{\alpha}}},~ I_Y^X, ~ \{f^{\tilde\alpha\tilde \beta}_{\alpha\beta}, ~ \beta^{\alpha}\leq \tilde\beta^{\tilde\alpha} \} \},$$
and define $\widetilde{Y^X}$ to be its inverse limit. 
Define a  map $p: Y^X\rightarrow \widetilde{Y^X}$ by 
$$p(f)=\prod\limits_{\beta^{\alpha}\in I_Y^X}p_{\beta}^{\alpha}(f).$$
Similarly, the collection $\{r_{\beta}^{\alpha}\}_{\beta^{\alpha}}$ of maps  induces a continuous map $r: \widetilde{Y^X}\rightarrow p(Y^X)$ defined by
$$r(S)=\prod\limits_{\beta^{\alpha}\in I_Y^X}r_{\beta}^{\alpha}(S_{\beta}^{\alpha}),$$
where $S=\prod\limits_{\beta^{\alpha}\in I_Y^X} S_{\beta}^{\alpha}\in \widetilde{Y^X}$.

\begin{Remark}{\rm{
The topology of $\widetilde{Y^X}$ is generated by the sets
$$B_S=\{\l(\prod\limits_{\beta^{\alpha}\in I_Y^X}T_{\beta}^{\alpha})\cap\widetilde{Y^X}~|~S_{\beta}^{\alpha}\subseteq T_{\beta}^{\alpha}\},\quad S=\prod\limits_{\beta^{\alpha}\in I_Y^X} S_{\beta}^{\alpha}\in \widetilde{Y^X}.$$}}
\end{Remark}

\section{ The Main Theorem and its proof}\label{section4}
 \begin{Theorem}\label{THE1}
 	Let $X,Y$ be compact metric spaces. Then $Y^X$ is homotopy equivalent to the inverse limit $\widetilde{Y^X}$.
 	More precisely, $Y^X$ is homeomorphic to the subspace $p(Y^X)\subseteq \widetilde{Y^X}$, as a strong deformation retract of $\widetilde{Y^X}$.
 \end{Theorem}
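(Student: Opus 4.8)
The plan is to prove the two assertions separately: that $p\colon Y^X\to p(Y^X)$ is a homeomorphism onto its image, and that $r$ realizes $p(Y^X)$ as a strong deformation retract of $\widetilde{Y^X}$; the homotopy equivalence then follows. Continuity of $p$ is already available, since every coordinate $p_\beta^\alpha$ is continuous by Lemma \ref{1.2} and $p$ is the map into the inverse limit induced by the compatible family $\{p_\beta^\alpha\}$. So the substance of the first assertion is injectivity together with continuity of the inverse.

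For injectivity I would exploit that the covers $\mathcal{W}_n$ are built from balls of radius $1/n\to 0$, so the classes shrink: for the class $E_\alpha$ containing a point $x$ one has $\bigcap_\alpha\overline{E_\alpha}=\{x\}$ (by the Remarks in Section~\ref{section3}), and likewise the classes of $Y_\beta$ have diameter tending to $0$. Assuming $f\neq g$ (the case of the distinguished constant map $C_{y_0}$ being immediate from the definition), pick $x_0$ with $f(x_0)\neq g(x_0)$, and choose $\alpha,\beta$ fine enough that $\overline{g(E_\alpha)}$ and $\overline{[f(x_0)]_\beta}$ are disjoint, which is possible since the first shrinks toward $g(x_0)$ and the second toward $f(x_0)$. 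Then, reading off (\ref{D1}), the class $[f(x_0)]_\beta$ is an admissible value for $f$ at $E_\alpha$ but not for $g$. Hence the admissible-value sets of (\ref{D1}) differ at $E_\alpha$, and since $p_\beta^\alpha(\cdot)$ is the product of these nonempty sets over the classes, $p_\beta^\alpha(f)\neq p_\beta^\alpha(g)$, so $p(f)\neq p(g)$.

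Continuity of $p^{-1}$ is where I expect the main difficulty. The governing fact I would isolate is a reconstruction lemma: if, at every sufficiently fine $\beta^\alpha$, the admissible-value sets of $f$ are contained in those of $g$, then $f(x)\in\overline{g(E_\alpha)}$ for the class $E_\alpha$ of each $x$ and each $\alpha$; refining $\alpha$ and using that the compact sets $\overline{E_\alpha}$ decrease to $\{x\}$ together with continuity of $g$ forces $g(x)=f(x)$. Thus the family $\{p_\beta^\alpha(f)\}$ determines $f$, which is the qualitative content of the inverse being well defined; the delicate part is to make this quantitative in the compact-open topology. Given a subbasic neighbourhood $V(K,N)$ of $f$, I would cover the compact set $K$ by classes $E_\alpha$, take a Lebesgue number, and use the uniform control of the closures $\overline{f(E_\alpha)}$ afforded by compactness of $X$ and $Y$ to exhibit a neighbourhood of $p(f)$ in $\widetilde{Y^X}$ whose preimage lies in $V(K,N)$. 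I anticipate that a single coordinate never suffices here and that the whole cofinal family of fine covers must be used simultaneously; correctly packaging this is the crux of the theorem.

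For the strong deformation retraction the finite-space machinery does the work. First $r$ is a retraction onto $p(Y^X)$: by the commutative square preceding the definition of $p$ and $r$, the family $\{r_\beta^\alpha\}$ sends $\widetilde{Y^X}$ into $\varprojlim W_\beta^\alpha$, and since each $r_\beta^\alpha$ fixes $W_\beta^\alpha$ pointwise, $r\circ\iota=\mathrm{id}$ for the inclusion $\iota\colon p(Y^X)\hookrightarrow\widetilde{Y^X}$; here one also needs $\varprojlim W_\beta^\alpha=p(Y^X)$, i.e.\ surjectivity of $p$ onto the limit of the images, which follows from the same reconstruction argument. Next, $r_\beta^\alpha(S_\beta^\alpha)\subseteq S_\beta^\alpha$ in every coordinate gives $S_\beta^\alpha\le r_\beta^\alpha(S_\beta^\alpha)$ in each finite $T_0$-space, hence $\mathrm{id}\le\iota\circ r$ in the coordinatewise order. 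Finally I would set $H(S,t)=S$ for $t<1$ and $H(S,1)=\iota r(S)$, and verify continuity using the generators $B_P$ of the topology of $\widetilde{Y^X}$: one has $H^{-1}(B_P)=(B_P\times[0,1))\cup((\iota r)^{-1}(B_P)\times\{1\})$, which is open because $r_\beta^\alpha(S_\beta^\alpha)\subseteq S_\beta^\alpha$ for all coordinates yields $(\iota r)^{-1}(B_P)\subseteq B_P$. Then $H_0=\mathrm{id}$, $H_1=\iota r$, and $H(S,t)=S$ for all $t$ whenever $S\in p(Y^X)$, so $H$ is the desired strong deformation retraction.
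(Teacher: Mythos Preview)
Your proposal is correct and follows the same approach as the paper: injectivity of $p$ via fine covers, continuity of $p^{-1}$ by a compactness/fine-cover contradiction argument, and the two-step homotopy $H(S,t)=S$ for $t<1$, $H(S,1)=r(S)$ using $r_\beta^\alpha(S_\beta^\alpha)\subseteq S_\beta^\alpha$ coordinatewise. You are in fact slightly more careful than the paper in one place: you flag the need for $\varprojlim W_\beta^\alpha = p(Y^X)$ (so that $r$ really lands in $p(Y^X)$), a point the paper simply asserts when it declares $r\colon\widetilde{Y^X}\to p(Y^X)$.
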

\begin{proof}
	The continuity of $p$ follows from Lemma \ref{1.2}.  $p$ is clearly injective.  Suppose we have two different maps $f, g$ of $Y^X$. Then there is a point $x\in X$ such that $f(x)\not=g(x)$. One can choose two  open neighborhoods $V_1, V_2$ of $f(x)$ and $g(x)$ respectively with $\overline{V_1}\cap\overline {V_2}=\emptyset$. Let
$$d(\overline{V_1}, \overline {V_2})=\eta>0.$$
The continuity of $f, g$ allows us to find  open neighborhoods $W_1, W_2$ of $x$ with   $f(W_1)\subseteq V_1$ and $g(W_2)\subseteq V_2$. 

Let $\beta\in I_Y$. It is clear that $f(W_1\cap W_2)$ and $g(W_1\cap W_2)$ are covered by finite open balls in $\beta$. As $R(\beta)$ goes to $0$, these open balls are contained in $V_1, V_2$ respectively. Thus there exists ${\beta}\in I_Y$ satisfying $R(\beta)<\frac{\eta}{2}$  and for any $x_1, x_2\in  W_1\cap W_2$, $$U^{\beta}_{f(x_1)}\subseteq {V_1},~U^{\beta}_{g(x_2)}\subseteq {V_2}.$$ 
Consider ${\alpha}\in I_X$ with $$U_x^{\alpha}\subseteq W_1\cap W_2.$$ It follows immediately that $[x]_{\alpha}\subseteq W_1\cap W_2,$
 and $[f(x_1)]_{\beta}\subseteq V_1,~[g(x_2)]_{\beta}\subseteq V_2$ for any $x_1, x_2\in  [x]_{\alpha}.$
 Suppose to the contrary that $p(f)=p(g)$. Then $(p_{\beta}^{\alpha})_{r(\beta)}(f)=(p_{\beta}^{\alpha})_{r(\beta)}(g)$ 
  for $\alpha,$  $\beta$ we have chosen. Note that
for every given map $h\in (p_{\beta}^{\alpha})_{r(\beta)}(f)=(p_{\beta}^{\alpha})_{r(\beta)}(g)$, there exist $z_1, z_2\in {h([x]_{\alpha})}$ and $x_1, x_2\in  [x]_{\alpha}$ such that $$d(z_1, f(x_1))= d(z_2, g(x_2))=0.$$  
Since $\text{diam}({h([x]_{\alpha})})\leq{2}{R(\beta)},$ we have
 $$ d(f(x_1), g(x_2))\leq d(f(x_1), z_1)+d(z_1, z_2)+d(z_z, g(x_2))\leq{2}{R(\beta)}<\eta.$$
 This contradicts with the fact that $f(x_1)\in V_1,~g(x_2)\in V_2.$
 Thus $(p_{\beta}^{\alpha})_{r(\beta)}(f)\not=(p_{\beta}^{\alpha})_{r(\beta)}(g)$,
it follows that $p$ is injective.

We now obtain a well-defined map  $$q: p(Y^X)\rightarrow Y^X,\quad S=\prod\limits_{\beta^{\alpha}\in I_Y^X} S_{\beta}^{\alpha}\mapsto \bigcap_{\beta^{\alpha}\in I_Y^X} (p_{\beta}^{\alpha})^{-1}(S_\beta^{\alpha}),$$
which is the inverse of $p$.

To show that this map is continuous at every point $S$, let us choose an arbitrary open neighborhood $V(K, N)$ of 
$q(S)$, where $K$ is a compact subset of $X$ and $N$ is an open subset of $Y$. For the sake of simplicity,
we shall write $f=q(S)$. We claim that the minimal open neighborhood $$W=\prod\limits_{\beta^{\alpha}\in I_Y^X} B_{S_{\beta}^{\alpha}}\cap W_{\beta}^{\alpha}$$
of $S$ satisfying $q(W)\subseteq V(K, N)$. 

Let $\widetilde{S}=\prod\limits_{\beta^{\alpha}\in I_Y^X} \widetilde S_{{\beta}^{\alpha}}\in W$ and write $\widetilde{f}=q(\widetilde{S})$. This means that for each index $\beta^{\alpha}\in I_Y^X$, 
$$S_{\beta}^{\alpha}\subseteq \widetilde S_{\beta}^{\alpha}.$$
For each $f(x)\in f(K)$, 
there is an open neighborhood $N_{f(x)}$ of $f(x)$ in $N$ satisfying
$ \overline{N}_{f(x)}\subseteq N.$
  Then $\{N_{f(x)}\cap f(K)\}_{x\in K}$ is an open cover of $f(K)$. Since $f(K)$ is compact, we get a finite subcover $\{N_i\cap f(K)\}_{i=1}^n$ such that $f(K)= \cup_{i=1}^{n}(N_i\cap f(K))$. 

Assume to the contrary that $\widetilde{f}\not\in V(K, N)$, which implies there is a point $x\in K$ such that $\widetilde{f}(x)\cap N=\emptyset.$ Then it follows that
$\widetilde{f}(x)\cap(\cup_{i=1}^n\overline{N}_i)=\emptyset.$
One  has an open neighborhood $N_{\widetilde{f}(x)}$ of $\widetilde{f}(x)$ with 
$\overline N_{\widetilde{f}(x)}\cap(\cup_{i=1}^n\overline N_i)=\emptyset.$ 
Let $$d(\overline N_{\widetilde{f}(x)}, ~\cup_{i=1}^n\overline N_i)=\eta>0.$$
By the continuity of $\widetilde{f}$,
we can find an open neighborhood $V_x$ of $x$ with $V_x\subseteq K$ and $\widetilde{f}(V_x)\subseteq N_{\widetilde{f}(x)}$.
Let  $\alpha\in I_X$ with  $U_x^{\alpha}\subseteq V_x$, $\beta\in I_Y$ with $R(\beta)<\frac{\eta}{2}$ and $U_{\widetilde{f}(x_1)}^{\beta}\subseteq N_{\widetilde{f}(x)}, U^{\beta}_{f(x_2)}\subseteq \cup_{i=1}^n{N}_i$ for any $x_1, x_2\in V_x$. A similar argument in the second paragraph of this proof works  for the existence of  $\beta$ in this case.
Note that for every map $h\in (S_{\beta^\prime}^{\alpha^{\prime}})_{r(\beta)}\subseteq  (\widetilde S_{\beta^\prime}^{\alpha^{\prime}})_{r(\beta)}$, one can have points $z_1, z_2\in {h([x]_{\alpha})}$ and $x_1, x_2\in  [x]_{\alpha}$ such that $$d(z_1, \tilde f(x_1))= d(z_2, f(x_2))=0.$$  
 We get
  $$ d(\tilde f(x_1), f(x_2))\leq d(\tilde f(x_1), z_1)+d(z_1, z_2)+d(z_z, f(x_2))\leq{2}{R(\beta)}<\eta.$$
This is a contradiction, because
  $\widetilde{f}(y_1)\in N_{\widetilde{f}(x)},~ f(y_2)\in \cup_{i=1}^n{N}_i.$
Then we conclude that $\widetilde{f}\in V(K, N)$ and $q$ is continuous.

Hence, the composition $\widetilde{Y^X}\xrightarrow{r}p(Y^X)\xrightarrow{q}Y^X$ is a continuous map $qr: \widetilde{Y^X}\rightarrow Y^X$.
Then $p(Y^X)$ is a homeomorphic copy of $Y^X$ inside $\widetilde{Y^X}$. It remains to prove that the map $r: \widetilde{Y^X}\rightarrow p(Y^X)$ satisfies $r\simeq id_{\widetilde{Y^X}}$. Define a homotopy $H: \widetilde{Y^X}\times [0, 1]\rightarrow \widetilde{Y^X}$ by
\begin{equation*}
H(S, t)=
\begin{cases}
S  &\text{if}~t\in[0, 1),\\
r(S) &\text{if}~t=1.
\end{cases}
\end{equation*}
 We only need to prove that $H$ is continuous at the point $(S, 1)$. Consider the minimal open set $B_{r(S)}$ of $r(S)$.
 We claim that $H(B_S\times [0, 1])\subseteq B_{r(S)}$.
 
 For $S\not\in p(Y^X)$, since $r(S)\subseteq S$, $H(B_S\times [0, 1))=B_S\subseteq B_{r(S)}$. If $T\in B_S\times 1\cap (p(Y^X)\times 1)$, then $$r(S)\subseteq S\subseteq T=r(T),$$ so $r(T)\in B_{r(S)}$.
 If $T\in B_S\times 1\cap (p(Y^X))^c\times 1$, we have $2^S\subseteq 2^T$, then $r(S)\subseteq r(T)$, so $r(T)\in B_{r(S)}$. 
 For $S\in p(Y^X)$, $r(S)=S$. Then $H(B_S\times [0, 1))=B_S= B_{r(S)}$. 
If $T\in B_S\times 1\cap ( p(Y^X)\times 1)$, then $S\subseteq T=r(T)$, so $r(T)\in B_{S}$. 
If $T\in B_S\times 1\cap (p(Y^X))^c\times 1$, we have
$$S\in  2^T\cap p(Y^X),$$  then $S\subseteq r(T)$ and so $~r(T)\in B_S$.

\end{proof}

 Theorem \ref{THE1} points out the homotopical structure of $Y^X$ can be recovered by the inverse limit $\widetilde{Y^X}$. As we have seen, the structure of inverse limit $\widetilde{Y^X}$ is much simpler than $Y^X$ and depends only on the finite covers of $X, Y$. 
 
\begin{Example}
{\rm{Consider $(X, d)=(Y, d)=[0, 1]$ with the usual Euclidean metric and the index sets $I_X, I_Y$ given by the same $\mathcal{U}_n$ as in Example \ref{EX1}. Let $\beta^{\alpha}\in I_Y^X.$ Suppose $\alpha=\mathcal{W}_n, \beta=\mathcal{W}_m$, then $X_{\alpha}, X_{\beta}$ are of the form $(\ref{formula1})$. Thus the inverse system $\mathscr{A}$ is easily described. We shall characterize $W_{\beta}^{\alpha}$. Let $S_{\beta}^{\alpha}\in W_{\beta}^{\alpha}$. By a direct calculation, $g\in S_{\beta}^{\alpha}$ if and only if $g$ satisfies the following conditions:
\begin{align*}
(i)~ &g(\frac{t}{n})\in A_{t+1}=\{(\frac{k-1}{m}, \frac{k}{m}), \frac{k}{m}, (\frac{k}{m}, \frac{k+1}{m})\}\cap Y_{\beta}~ \text{or}~ \{(\frac{k-1}{m}, \frac{k}{m})\}\cap Y_{\beta}.\\
(ii)~&g((\frac{t-1}{n}, \frac{t}{n}))\in B_t=\left(\bigcup_{0\leq i\leq s}\{(\frac{k_i-1}{m}, \frac{k_i}{m})\}, \frac{k_i}{m}, (\frac{k_i}{m}, \frac{k_i+1}{m})\right)\cap Y_{\beta} ~\text{or}~ \\
&\{(\frac{k-1}{m}, \frac{k}{m})\}\cap Y_{\beta}, ~\text{where}~ k_0\leq k_1\leq\cdots\leq k_s~\text{or}~k_s\leq k_{s-1}\leq\cdots\leq k_0.\\
(iii)~& A_0\subseteq B_0\supseteq A_1\subseteq B_1\supseteq A_2\subseteq\cdots\supseteq A_n\subseteq B_n\supseteq A_{n+1}.
\end{align*}
Therefore, we can give a characterization of $p(Y^X)$.}}
\end{Example}

It has been shown that once given a collection of nested finite open covers of compact metric spaces $X$ and $Y$, that is, the index set $I_X$ and $I_Y$, one can explicitly compute the inverse system $\mathscr{A}$ defining the inverse limit $\widetilde{Y^X}$ that is homotopy equivalent to the mapping space $Y^X$. 
Observe that, in general, $\widetilde{Y^X}$ is not unique, it depends on the finite open covers we choose at the beginning.

We  now apply Theorem \ref{MC1} to obtain an inverse system of compact polyhedra. For each $ \beta^{\alpha}\in I_Y^X$, there exists a simplicial complex $\mathscr{K}(2^{Y_{\beta}^{X_{\alpha}}})$ and a weak homotopy equivalence
$$h_{\beta}^{\alpha}: |\mathscr{K}(2^{Y_{\beta}^{X_{\alpha}}})|\longrightarrow 2^{Y_{\beta}^{X_{\alpha}}}.$$
Also, when $\beta^{\alpha}\leq\tilde\beta^{\tilde\alpha}\in I_Y^X,$ the map 
$f^{\tilde\alpha\tilde \beta}_{\alpha\beta}$ induces a map $\mathscr{K}(f^{\tilde\alpha\tilde \beta}_{\alpha\beta})$ making the following diagram commute:
\begin{equation*}
\xymatrix{|\mathscr{K}(2^{Y_{\tilde\beta}^{X_{\tilde\alpha}}})|\ar[d]_{h_{\tilde\beta}^{\tilde\alpha}}\ar[r]^{\mathscr{K}(f^{\tilde\alpha\tilde \beta}_{\alpha\beta})}  & |\mathscr{K}(2^{Y_{\beta}^{X_{\alpha}}})|\ar[d]^{h_{\beta}^{\alpha}} \\
	2^{Y_{\tilde\beta}^{X_{\tilde\alpha}}} \ar[r]^{f_{\alpha\beta}^{\tilde\alpha\tilde\beta}}  & 2^{Y_{\beta}^{X_{\alpha}}}.
}
\end{equation*}
 Thus the inverse system $\mathscr{A}$ of finite $T_0$-spaces induces an inverse system 
$$\mathscr{B}=\{|\mathscr{K}(2^{Y_{\beta}^{X_{\alpha}}})|,~ I_Y^X,~  \{\mathscr{K}(f^{\tilde\alpha\tilde \beta}_{\alpha\beta}),~ \beta^{\alpha}\leq\tilde\beta^{\tilde\alpha}\in I_Y^X \}\}$$
of compact polyhedra.
Hence, we have  the following result.

\begin{Corollary}\label{CO1}
	Let $(X, d),~ (Y, d)$ be compact metric spaces. Then the mapping space $Y^X$ is weak homotopy equivalent to the inverse limit of an inverse system of compact polyhedra.
\end{Corollary}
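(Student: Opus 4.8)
The plan is to promote the pointwise weak equivalences of McCord's Theorem \ref{MC1} to a single map of inverse limits and then splice in Theorem \ref{THE1}. The maps $h_\beta^\alpha\colon |\mathscr{K}(2^{Y_\beta^{X_\alpha}})|\to 2^{Y_\beta^{X_\alpha}}$, together with the simplicial maps $\mathscr{K}(f_{\alpha\beta}^{\tilde\alpha\tilde\beta})$ and the commuting square displayed just above, constitute a level morphism $\mathbf{h}\colon\mathscr{B}\to\mathscr{A}$ of inverse systems over the common directed set $I_Y^X$. Passing to inverse limits produces a continuous map $h=\varprojlim\mathbf{h}\colon\varprojlim\mathscr{B}\to\varprojlim\mathscr{A}=\widetilde{Y^X}$, and since Theorem \ref{THE1} already gives $\widetilde{Y^X}\simeq Y^X$, the entire corollary collapses to the single claim that $h$ is a weak homotopy equivalence: granting this, the composite $\varprojlim\mathscr{B}\xrightarrow{\,h\,}\widetilde{Y^X}\simeq Y^X$ is the desired weak equivalence.

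To study $h$ I would first trade the directed set for a cofinal sequence. As $X$ and $Y$ are compact metric, each cover $\mathcal{W}_n$ is totally ordered by inclusion, so $I_X$ and $I_Y$ are the chains $\mathcal{W}_1\subseteq\mathcal{W}_2\subseteq\cdots$, and therefore $I_Y^X$ is cofinally $(\mathbb{N},\le)\times(\mathbb{N},\le)$, in which the diagonal $\{\mathcal{W}_n^{\mathcal{W}_n}\}_{n\ge 1}$ is cofinal. Restricting $\mathscr{A}$, $\mathscr{B}$, and $\mathbf{h}$ to this diagonal alters neither inverse limit and exhibits $\mathbf{h}$ as a level map of \emph{towers} that is a weak homotopy equivalence at every stage. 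Such a map induces an isomorphism of towers of homotopy groups $\{\pi_k(\cdot)\}$ at each stage, hence isomorphisms on both $\varprojlim\pi_k$ and $\varprojlim^1\pi_{k+1}$; through the Milnor $\varprojlim^1$ exact sequence this shows that $\mathbf{h}$ induces an isomorphism on every homotopy group of the \emph{homotopy} inverse limits $\operatorname{holim}\mathscr{B}\to\operatorname{holim}\mathscr{A}$.

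The main obstacle is exactly the gap between the homotopy inverse limit and the genuine inverse limit: the functor $\varprojlim$ is neither exact nor homotopy invariant, so a level weak equivalence of towers yields only a shape equivalence of the limits in general, not a weak homotopy equivalence. I would close this gap by identifying $\varprojlim$ with $\operatorname{holim}$ for both towers, which holds once the towers are (weakly equivalent to) towers of fibrations; one produces such models by the mapping-path-space construction, provided a Mittag-Leffler condition keeps the $\varprojlim^1$ obstruction inert. The surjectivity of the bonding maps obtained in Lemma \ref{CCC} is precisely the kind of structure that supplies this Mittag-Leffler control, and I expect the verification that it survives passage to $\mathscr{B}$ to be the technical heart of the argument. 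A more elementary alternative would sidestep homotopy limits altogether and apply McCord's basis-cover criterion to $h$ directly over the minimal open sets $B_S$ of $\widetilde{Y^X}$; but the difficulty would simply migrate to describing the preimages $h^{-1}(B_S)$ and checking that $h$ restricts to a weak equivalence on each, so I would treat the $\operatorname{holim}$-versus-$\varprojlim$ comparison as the decisive step either way.
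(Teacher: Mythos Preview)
Your proposal is considerably more careful than the paper's own treatment. The paper offers no proof of the corollary beyond the paragraph preceding it: it constructs the level maps $h_\beta^\alpha$, records the commuting square giving the morphism $\mathbf{h}\colon\mathscr{B}\to\mathscr{A}$, writes ``Hence, we have the following result,'' and stops. In particular, the paper never addresses the passage from levelwise weak equivalences to a weak equivalence of inverse limits; it simply asserts the conclusion.

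The obstacle you isolate is therefore real and is not handled in the paper. A level map of towers that is a weak equivalence at each stage need not induce a weak equivalence of the genuine inverse limits, and the paper supplies no argument (no fibrancy replacement, no Mittag--Leffler control, no McCord-type local analysis on $\widetilde{Y^X}$) to close this gap. Your two suggested routes---comparing $\varprojlim$ with $\operatorname{holim}$ via a Mittag--Leffler argument fed by the surjectivity in Lemma~\ref{CCC}, or applying McCord's open-cover criterion directly to $h$ over the basic sets $B_S$---are both plausible, and either would constitute substantially more than what the paper provides. So you are not diverging from the paper's approach so much as attempting to supply the missing justification; be aware that the paper itself leaves this step as an unproved assertion.
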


\section{ Isotopies of Compact Metric Spaces}\label{section5}
This section is devoted to providing an approach to the approximation of isotopies of compact metric spaces by means of moves of finite $T_0$-spaces.
We include below some definitions and notations that we will use.
\begin{Definition}{\rm{
Let $M$  be a topological space. An  \emph{isotopy} of $M$  is a level preserving homeomorphism $H: M\times I\rightarrow M\times I$.  $H_t$ is the homeomorphism $M\rightarrow M$ defined by $H(x, t)=(H_tx, t)$ for $(x, t) \in M\times I.$

Two homeomorphisms $f, g: M\rightarrow M$ are isotopic if there exists an isotopy $H$ of $M$ with $H_0=f, H_1=g$.}}
\end{Definition}

Let $X$ be a finite $T_0$-space, $x\in X$. We denote by 
$$U_x=\{y\in X~|~y\leq x\}, \quad F_x=\{y\in X~|~y\geq x\}$$
the minimal open set of $X$ containing $x$ and the closure of the set $\{x\}$ in $X$ respectively.

\begin{Definition}{\rm{
A \emph{move} of a finite $T_0$-space  $X$ is  a homeomorphism of $X$ keeping fixed the complement of a minimal open set $U_x$ for some $x\in X$.}}
\end{Definition}
 
Before establishing the relationship between isotopies of  compact metric spaces and isotopies of  finite $T_0$-spaces, we need a lemma describing the isotopy of a finite $T_0$-space.
\begin{Lemma}\label{isotopy}
Let $H$ be an  isotopy of a finite $T_0$-space $X$. Then $H_1H_0^{-1}$ can be expressed as the composition of a finite number of moves.
\end{Lemma}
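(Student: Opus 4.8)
The plan is to exploit the rigidity of the automorphism group of a finite poset and to show that an isotopy of $X$ cannot actually move any point, so that $H_1H_0^{-1}=\mathrm{id}_X$, which is then (trivially) a finite composition of moves. First I would record two structural facts. Since $H$ is a level-preserving homeomorphism, its restriction to each slice $X\times\{t\}$ is a homeomorphism $H_t\colon X\to X$; by the correspondence recalled in Section \ref{section2} between continuous maps of finite $T_0$-spaces and order-preserving maps, each $H_t$ is therefore an order-automorphism of the poset $X$. Secondly, for every fixed $x\in X$ the assignment $t\mapsto H_t(x)$ is a continuous path $I\to X$. Because $I$ is connected, it suffices to prove that $t\mapsto H_t$ is locally constant.

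The engine of the argument is the minimal-open-set description of continuity. Fix $t_0\in I$ and write $\psi=H_{t_0}$. For each $x\in X$, continuity of $t\mapsto H_t(x)$ at $t_0$, applied to the minimal open neighbourhood $U_{\psi(x)}$ of $\psi(x)$, yields an open interval about $t_0$ on which $H_t(x)\in U_{\psi(x)}$, i.e. $H_t(x)\le\psi(x)$. As $X$ is finite I may intersect these finitely many intervals to obtain a single $\delta>0$ with $H_t(x)\le H_{t_0}(x)$ for every $x\in X$ and every $t$ with $\lvert t-t_0\rvert<\delta$; in other words $H_t\le H_{t_0}$ pointwise on this neighbourhood of $t_0$.

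It then remains to turn this pointwise inequality into an equality, and here I would use the following rigidity lemma for finite posets: an order-automorphism $\sigma$ of a finite poset with $\sigma(w)\ge w$ for all $w$ is the identity. This follows by iterating $\sigma$ along the chain $w\le\sigma(w)\le\sigma^2(w)\le\cdots$: finiteness forces the chain to stabilize, and antisymmetry then gives $\sigma(w)=w$. Applying this to $\sigma=H_{t_0}H_t^{-1}$, which by the previous step satisfies $\sigma(w)\ge w$ for all $w\in X$, yields $H_t=H_{t_0}$ for all $t$ near $t_0$. Hence $t\mapsto H_t$ is locally constant, so constant on the connected interval $I$, and $H_1H_0^{-1}=\mathrm{id}_X$. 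The subdivision-and-telescoping viewpoint, writing $H_1H_0^{-1}$ as the product of the transitions $H_{t_i}H_{t_{i-1}}^{-1}$ over a fine partition $0=t_0<\cdots<t_n=1$, is a convenient way to package the statement, with each transition being a move. I expect the genuine obstacle to be isolating this rigidity phenomenon, namely recognizing that continuity in the Alexandroff topology forces the slice automorphisms to be comparable and hence equal; once that is in place the conclusion is immediate.
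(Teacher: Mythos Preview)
Your argument is correct and in fact proves something stronger than the lemma: every isotopy of a finite $T_0$-space has $H_t=H_0$ for all $t$, so $H_1H_0^{-1}=\mathrm{id}_X$ and the decomposition into moves is vacuous. The two ingredients are sound: continuity of $t\mapsto H_t(x)$ into the Alexandroff space $X$ forces $H_t(x)\le H_{t_0}(x)$ on a neighbourhood of $t_0$ (using the minimal open set $U_{H_{t_0}(x)}$ and finiteness of $X$ to get a uniform $\delta$), and the rigidity lemma that an order-automorphism $\sigma$ with $\sigma\ge\mathrm{id}$ is the identity follows, as you say, from iterating $\sigma$ and using that $\Aut(X)$ is finite.

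The paper takes a substantially more elaborate route. It does not observe that the slice maps are constant; instead it studies each preimage $H^{-1}(q\times I)$, shows its projection to $X$ is a chain $x_1^q<\cdots<x_n^q$, partitions $I$ accordingly, and then threads a sequence of sections $f_0\le\cdots\le f_n$ through $X\times I$ that differ successively only on one minimal open set, producing moves $h_i=k_ik_{i-1}^{-1}$. This is modeled on classical decompositions of isotopies in settings (such as PL topology) where isotopies are genuinely nontrivial. In light of your rigidity observation, however, the whole apparatus collapses here: each chain has length one and every move produced is the identity. Your approach is shorter, reveals why the lemma is essentially trivial for finite $T_0$-spaces, and gives a sharper statement; the paper's approach is correct but does not exploit this rigidity. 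Your closing remark about telescoping over a partition is unnecessary once $H_1H_0^{-1}=\mathrm{id}_X$ is established, though it does no harm.
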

\begin{proof}
Note that $X\times I$ has a natural partial order, that is, $(x_1, t_1)\leq (x_2, t_2)$ whenever $x_1\leq x_2$ in $X$ and $t_1\leq t_2$ in $I$.
We first prove that $H$ is order preserving, then so does $H^{-1}$. Suppose $(x_1, t_1)\leq (x_2, t_2)$ in $X\times I$.
 Since $H^{-1}(F_{f(x_1)}\times [t_1, t_2])$ is a closed set containing $(x_1, t_1)$ and $H$ is level preserving,  $$(x_2, t_2)\in F_{x_1}\times [t_1, t_2]\subseteq H^{-1}(F_{f(x_1)}\times [t_1, t_2]).$$ Therefore $H(x_1, t_1)\leq H(x_2, t_2)$. 
	
Let  $q$ be a point of $X$.
Since $X$ is a finite set and $q\times I$ is a totally ordered subset of $X\times I$, we have a sequence of points  $x^q_1,\cdots, x^q_n$ in $X$, such that $x^q_1<\cdots< x^q_n$ and  image of the canonical projection 
$p: H^{-1}(q\times I)\rightarrow X$ is $\{x^q_1,\cdots, x^q_n\}$. Note that $I$ can be partitioned into disjoint connected subsets $I^q_1,\cdots, I^q_n$ satisfying that 
$$ H^{-1}(q, t)=(x^q_i, t),~t\in I^q_i,$$
and $sup I^q_i=inf I^q_{i+1}.$

Consider a subset $W$ of continuous maps from $X$ to $I$ consisting of maps with the property that for each $x\in X$,
$$ (x, f(x))=(x^q_i, sup I^q_i)~\text{or}~(x^q_i, inf I^q_i), ~\text{for some $i$ and $q$}.$$
It is not hard to see that we can choose a sequence of maps $f_0, f_1,\cdots, f_n$ in $W$ such that:
\begin{itemize}
\item[(1)] $f_i$ and $f_{i-1}$ agree on all but a minimal open set $U_x$ for some $x\in X$.
\item[(2)] $f_0\leq\cdots\leq f_n$.
\item [(3)] $f_0(X)=0, f_n(X)=1$
\item[(4)] For any $i$ and $q$, the  graph $f^*_i$ of $f_i$  intersects with $H^{-1}(q\times I)$ at a single point in $X\times I$.
\end{itemize}
 Now let $k_i=pHf^*_i,~ h_i=k_ik_{i-1}^{-1}.$ Then we have $h_0=k_0=H_0$, and
  $$ H_1=h_nh_{n-1}\cdots h_1H_0.$$
\end{proof}

\begin{Theorem}\label{thmiso}
Suppose $(X, d)$ is a compact metric space, and  $H$ is an isotopy of $X$. Given any $\epsilon>0$,  then there exists an index ${\alpha}\in I_{X}$ and a sequence of bijections from $X_{\alpha}$ to $X_{\alpha}$, $h_1,\cdots, h_n$, such that:
\begin{itemize}
\item[(i)] For  $i\geq 1$, $h_i$ is the restriction of some move of $2^{X_{\alpha}}$.
\item [(ii)] For each $E_{\alpha}\in X_{\alpha}$, $d(H_1H^{-1}_0(E_{\alpha}), h_nh_{n-1}\cdots h_1(E_{\alpha}))<\epsilon.$
\end{itemize}
\end{Theorem}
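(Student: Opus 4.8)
The plan is to pass from the isotopy $H$ of the (infinite) space $X$ to a genuine isotopy of a finite $T_0$-space associated to a suitably fine index $\alpha$, apply Lemma \ref{isotopy} there to obtain an \emph{exact} factorization into moves, and only at the very last step pay the single price of pushing the result back through the quotient $X\to X_\alpha$. The reason for insisting on a true finite isotopy, rather than chopping $H_1H_0^{-1}$ into many small homeomorphisms and approximating each by moves, is that the naive strategy forces the time partition to refine as $R(\alpha)\to 0$: the per-step errors are of size $\sim R(\alpha)$ and accumulate over $\sim 1/R(\alpha)$ steps, yielding no control. Feeding a bona fide isotopy into Lemma \ref{isotopy} sidesteps this, since the factorization $\widetilde H_1\widetilde H_0^{-1}=M_n\cdots M_1$ is exact on the finite model, so the only error incurred is the one-time discretization error, bounded by the diameter of a single class.

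First I would record the analytic input. As $X\times I$ is compact and $H,H^{-1}$ are continuous, they are uniformly continuous; in particular $\{H_t\}$ and $\{H_t^{-1}\}$ are uniformly equicontinuous and $H_1H_0^{-1}$ is uniformly continuous. Given $\epsilon>0$, I would choose $\alpha\in I_X$ with $R(\alpha)$ small enough that every class $E_\alpha\in X_\alpha$ has diameter $<\epsilon$, and fine enough (relative to the modulus of continuity of $H$) that the combinatorial bookkeeping below is unambiguous. Fixing a representative point in each class, I consider the $\lvert X_\alpha\rvert$ tracks $t\mapsto H_tH_0^{-1}(\text{representative})$; since every $H_tH_0^{-1}$ is a homeomorphism, distinct tracks remain disjoint for all $t$, and this is exactly the feature that will allow a net permutation of classes to be realized without collisions.

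The heart of the argument is to promote this motion of representatives to a level-preserving self-homeomorphism $\widetilde H$ of $2^{X_\alpha}\times I$ (one works inside $2^{X_\alpha}$, rather than $X_\alpha$ itself, because $X_\alpha$ may carry too few self-homeomorphisms to express the required bijections, whereas $\Aut(2^{X_\alpha})$ contains all permutations of the classes). Using the overlap assignment of Section \ref{section3} fibrewise, I would record at each time $t$ the configuration of classes swept out by the tracks, assemble these into $\widetilde H$, and then verify the two defining properties of an isotopy: joint continuity on $2^{X_\alpha}\times I$, and that each level $\widetilde H_t$ is a homeomorphism of $2^{X_\alpha}$. The second property is precisely where the homeomorphism property of $H_t$ and the order-preservation argument already used in the proof of Lemma \ref{isotopy} (a level-preserving homeomorphism of a product with $I$ is automatically order preserving) enter, guaranteeing $\widetilde H_t\in\Aut(2^{X_\alpha})$. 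With $\widetilde H$ in hand, Lemma \ref{isotopy} yields $\widetilde H_1\widetilde H_0^{-1}=M_n\cdots M_1$ with each $M_i$ a move of $2^{X_\alpha}$; setting $h_i$ to be the restriction of $M_i$ to the copy of $X_\alpha$ inside $2^{X_\alpha}$ gives bijections of $X_\alpha$ and establishes (i).

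Finally I would prove (ii). Because the factorization on $2^{X_\alpha}$ is exact, $h_n\cdots h_1(E_\alpha)$ is exactly the class that the track issuing from the representative of $E_\alpha$ reaches at time $1$, namely the class of $H_1H_0^{-1}(\text{representative})$; since this point lies in $H_1H_0^{-1}(E_\alpha)$ and classes have diameter $<\epsilon$, the bound $d\bigl(H_1H_0^{-1}(E_\alpha),\,h_n\cdots h_1(E_\alpha)\bigr)<\epsilon$ follows with no accumulation. I expect the genuine obstacle to be the middle step: the quotient assignment $X\to X_\alpha$ is intrinsically multivalued (a class is carried across several neighbouring classes), so it is not automatic that the induced levelwise map on $2^{X_\alpha}$ is bijective, nor that it is continuous in $t$. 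Turning the disjoint continuous tracks into a true isotopy of the finite space—so that Lemma \ref{isotopy} even applies—is the delicate part, requiring $\alpha$ fine enough and the configurations organized so that each $\widetilde H_t$ lands in $\Aut(2^{X_\alpha})$; everything else is either soft (compactness and uniform continuity) or a one-line estimate.
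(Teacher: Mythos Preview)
Your approach is essentially the paper's: build from $H$ an isotopy of the finite $T_0$-space $2^{X_\alpha}$, apply Lemma~\ref{isotopy} to factor it exactly into moves, and restrict each move to the singletons to get the $h_i$ (the paper observes that any homeomorphism of $2^{X_\alpha}$ preserves cardinalities, which is why the restriction to $X_\alpha$ is a bijection). The only cosmetic difference is that the paper extracts the levelwise bijection $G_t$ directly from the overlap set $p_\alpha^\alpha(H_t)$ rather than by tracking chosen representatives; the step you correctly flag as delicate---that the $G_t$ assemble into a genuine isotopy of $2^{X_\alpha}$---is precisely the step the paper asserts without further detail.
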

\begin{proof}
Let $\alpha\in I_X$. For each $H_t$,  $p_{\alpha}^{\alpha}(H_t)$, being the image of  $p_{\alpha}^{\alpha}: X^X\rightarrow 2^{X_{\alpha}^{X_{\alpha}}}$, contains a bijection
$X_{\alpha}\rightarrow X_{\alpha}$, which is denoted by $G_t$. Note that
$X_{\alpha}$ is not given any topology, however, $2^{X_{\alpha}}$, when regarded as a subset of $2^{X_{\alpha}^{X_{\alpha}}}$, is a finite $T_0$-space.
Extend $G_t$ to a homeomorphism $2^{X_{\alpha}}\rightarrow 2^{X_{\alpha}}$ by taking
$A$ to $G_t(A)$ for every $A\subseteq X_{\alpha}.$
 Therefore $\{G_t\}_{t\in I}$ combine to give an isotopy of $2^{X_{\alpha}}$ in itself.
 
Applying Lemma \ref{isotopy} to $G$ yields a finite number of moves of $2^{X_{\alpha}}$, $g_1,\cdots, g_n$, such that   
$$ G_1G^{-1}_0=g_ng_{n-1}\cdots g_1.$$
It follows from the definition of $G$ and the construction of $\{g_i\}$ that each $g_i$ preserves cardinalities of all subsets of $X_{\alpha}$, in particular, $g_i$ restricts to a bijection $X_{\alpha}\rightarrow X_{\alpha}$.
If $\alpha$ is chosen with $R(\alpha)$ sufficiently small, then  $h_i,~1\leq i\leq n,$ defined by 
$$ h_i(E_\alpha)=g_i(E_{\alpha}),~\text{for}~E_\alpha\in X_{\alpha},$$
satisfy the required properties.
\end{proof}

\begin{Remark}{\rm{
Some information about $h_i (i\geq 1)$ can  be obtained from the property $(i)$. For example, if $g_i$ keeps some element $A$ of $2^{X_{\alpha}}$ fixed, then so does $h_i$ when $A$ is viewed as a subset of $X_{\alpha}$.}}

\end{Remark}

\section*{Declarations}
The authors declare that they have no conflict of interest.

\end{document}